\theoremstyle{plain}
\newtheorem{theorem}{Theorem}[section]
\newtheorem{corollary}[theorem]{Corollary}
\newtheorem{lemma}[theorem]{Lemma}
\newtheorem{proposition}[theorem]{Proposition}
\theoremstyle{remark}
\theoremstyle{definition}
\begin{document}

\title[Unitary representations of $L^0(\mu,
{\mathbb T})$ and $C(M, {\mathbb T})$]{Unitary representations
of the groups of\\ measurable and continuous functions with\\ values in the circle}

\author{S{\l}awomir Solecki}

\address{Department of Mathematics\\
1409 W. Green St.\\
University of Illinois\\
Urbana, IL 61801, USA}

\email{ssolecki@math.uiuc.edu}

\keywords{Polish group, group of continuous functions, group of measurable functions, unitary representation}

\subjclass[2000]{22A25, 46A16, 46E10, 46E30}

\thanks{Research supported by NSF grant DMS--1001623.}

\begin{abstract}
We give a classification of unitary representations of certain
Polish, not necessarily locally compact, groups: the groups of all
measurable functions with values in the circle and the groups of all continuous functions
on compact, second countable, zero-dimensional spaces
with values in the circle.
In the proofs of our classification results, certain structure theorems and
factorization theorems for linear operators are used.
\end{abstract}

\maketitle

\section{Introduction}

We study unitary representations of the groups of measurable and continuous functions with values in the circle.
A description of unitary representations of such groups is of interest especially in view of recent considerable
activity around topological and measurable dynamics of these groups; see the remarks below. The reader may consult \cite{Pes} for
background information on dynamics of Polish non-locally compact groups. (Recall that a topological group is
{\em Polish} if its topology is metrizable by a complete separable metric.)

For a Borel probability measure $\mu$ on a standard Borel space
and a topological group $H$, let $L^0(\mu,H)$ be the
topological group of all $\mu$-equivalence classes of
$\mu$-measurable functions with values in $H$. The multiplication
on $L^0(\mu, H)$ is implemented pointwise and the topology is the
convergence in measure topology. Groups of this form were perhaps first
systematically considered in \cite{HM} to provide an
embedding of each topological group into a connected group. Recently, the more particular Polish
groups $L^0(\mu, {\mathbb T})$ generated substantial interest in
the context of extreme amenability \cite{Gl}, measure preserving
group actions \cite{GW}, representation properties of
Polish groups \cite{Pes2}, and generic properties of monothetic
subgroups of certain large groups \cite{MT}, \cite{S}.

We also consider groups $C(M,{\mathbb T})$ of all continuous
functions from $M$ to the circle group ${\mathbb T}$,
where $M$ is a compact, second countable space. We take
$C(M, {\mathbb T})$ with the pointwise multiplication and with the
topology of uniform convergence. This arrangement makes $C(M,
{\mathbb T})$ into a Polish group. Recently, measure preserving actions of $C(M, {\mathbb T})$
were studied in \cite{MS}.
Observe, that groups described above are usually not locally compact: $L^0(\mu, {\mathbb T})$ is not locally compact when $\mu$
is not purely atomic and neither is $C(M, {\mathbb T})$ when $M$ is infinite.

The goal of the present paper 
is to give classifications of strongly continuous unitary representations
of the Polish groups $L^0(\mu, {\mathbb T})$ and $C(M, {\mathbb T})$ when $M$ is zero-dimensional; see Theorems~\ref{T:LLmain} and \ref{T:CCmain}.
(Recall that a space is {\em zero-dimensional} if it has a topological basis consisting of sets
that are both closed and open; a typical example is the Cantor set.)
Roughly speaking, the classification results say that the groups
$L^0(\mu, {\mathbb T})$ and $C(M, {\mathbb T})$ behave like
infinite dimensional tori. Despite the fact that these groups do
not have irreducible unitary representations of dimension greater
than $1$ (even of dimension greater than $0$ in the case of
$L^0(\mu, {\mathbb T})$ for $\mu$ without atoms), their unitary
representations can be constructed as {\em countable} direct sums of
simple building blocks with the blocks being {\em uniquely determined}
by the representation; see Subsections~\ref{Su:des2} and
\ref{Su:des} for a description of these blocks.

In the proof of the classification result for $L^0(\mu, {\mathbb T})$, Theorem~\ref{T:LLmain}, besides the
spectral theorem, an important role is played by
Kwapie{\'n}'s structure theorem for linear operators between
linear $L^0$ spaces. One consequence of our result,
Corollary~\ref{C:measl}, is the existence, for a given continuous
unitary representation of $L^0(\mu, {\mathbb T})$, of a unique up
to measure equivalence, smallest with respect to the relation of
absolute continuity, finite Borel measure $\nu$ that is absolutely continuous with respect to $\mu$
and is such
that the representation is the composition of the natural
homomorphism $L^0(\mu, {\mathbb T})\to L^0(\nu,
{\mathbb T})$ and a continuous unitary representation of $L^0(\nu,
{\mathbb T})$. The proof of the classification result for $C(M, {\mathbb T})$, Theorem~\ref{T:CCmain}, uses
the classification of unitary representations of $L^0(\mu, {\mathbb T})$ and certain
factorization theorems for linear operators. As a consequence of
Theorem~\ref{T:CCmain}, we get that given a continuous unitary
representation of $C(M, {\mathbb T})$ there exists a unique up
to measure equivalence, smallest with respect to the relation of
absolute continuity, finite Borel measure $\nu$ on $M$ such
that the representation is the composition of the natural
homomorphism $C(M, {\mathbb T})\to L^0(\nu, {\mathbb T})$ and a
continuous unitary representation of $L^0(\nu, {\mathbb T})$. This
result is included in Corollary~\ref{C:measc}.

\subsection*{Notation and conventions} By $\mathbb R$, $\mathbb C$,
and $\mathbb T$ we denote the real numbers, the complex numbers,
and the multiplicative group $\{ z\in {\mathbb C}\colon |z|=1\}$,
respectively. The following spaces will be involved in our
considerations: $C(M, G)$, $L^p(\mu, G)$, where $M$ is a compact
metrizable space, $p=0,2$, $\mu$ is a Borel probability measure on
a standard Borel space, and $G={\mathbb T}, {\mathbb R}, {\mathbb
C}^k$, for $k\in {\mathbb N}$. When $G={\mathbb T}$ these spaces will be regarded as groups,
when $G={\mathbb R}$ or $G={\mathbb C}^k$ they will be regarded as
linear spaces over $\mathbb R$ or $\mathbb C$, respectively.
Unless otherwise stated (and this option will be exercised),
$C(M,G)$ is equipped with the uniform convergence topology,
$L^0(\mu,G)$ with the convergence in measure topology, and
$L^2(\mu, G)$ with the $L^2$ topology. Note however that on
$L^0(\mu, {\mathbb T})$ the convergence in measure and the $L^2$
topology coincide.
The unitary group of a complex Hilbert space $H$ will be denoted by ${\mathcal U}(H)$ and it
will always be considered with its strong operator topology.

\smallskip

I thank Ilijas Farah for our discussions of extreme
amenability in 2004, Marius Junge for pointing out to me Pisier's book
\cite{P}, and Vladimir Pestov for valuable comments.

\section{Unitary representations of $L^0(\mu, {\mathbb T})$}

Fix a Borel probability measure $\mu$ on a
standard Borel space $X$. We are interested in describing all
continuous unitary representations of $L^0(\mu, {\mathbb T})$.

Fix a linear order $<_X$ on $X$ of which we will assume that the order topology
it generates is compact, second countable and the Borel sets with respect to this topology
coincide with the Borel sets on $X$. For example, we can fix a Borel isomorphism from
$X$ to a closed subset of $[0,1]$ and use it to pull back the standard linear order on $[0,1]$.
The order is important for the uniqueness part of Theorem~\ref{T:LLmain}.

\subsection{Description of representations}\label{Su:des2}

Assume that we are given a sequence $\kappa = (k_1, \dots , k_n)$
of elements of ${\mathbb Z}\setminus \{ 0\}$ with
\begin{equation}\label{E:gro1}
k_1\leq k_2\leq \cdots \leq k_n.
\end{equation}
Assume we have a finite Borel measure $\lambda$ on $X^n$ whose marginal measures
are absolutely continuous with respect to $\mu$, that is,
for $i\leq n$
\begin{equation}\label{E:zer2}
(\pi_i)_*(\lambda)\ll \mu, \tag{${\rm A}1$}
\end{equation}
where, for $i\leq n$, $\pi_i\colon X^n\to X$ is the projection on
the $i$-th coordinate. With this set of data we associate the
following representation of $L^0(\mu,{\mathbb T})$ on
$L^2(\lambda, {\mathbb C})$:
\[
L^0(\mu, {\mathbb T})\ni f\to U_f\in {\mathcal U}(L^2(\lambda, {\mathbb C})),
\]
where for $h\in L^2(\lambda, {\mathbb C})$
\[
U_f(h) = (\prod_{i\leq n} (f\circ \pi_i)^{k_i}) h.
\]
Thus, the bounded function $\prod_{i\leq n} (f\circ \pi_i)^{k_i}$
acts on $h\in L^2(\lambda, {\mathbb C})$ by multiplication.
Condition \eqref{E:zer2} ensures that the representation is well
defined. We denote this representation by $\sigma(\kappa,
\lambda)$. We do allow $\lambda$ to be the zero measure, in which
case $\sigma(\kappa, \lambda)$ is the trivial representation.

We will consider the following two additional conditions on the
finite measure $\lambda$ as above: for $1\leq i<j\leq n$
\begin{equation}\label{E:zer1}
\lambda(\{ (x_1, \dots , x_n)\in X^n\colon x_i=x_j\}) =0,\tag{${\rm A}2$}
\end{equation}
and for $1\leq i<j\leq n$ with $k_i=k_j$,
\begin{equation}\label{E:zer3}
\lambda(\{ (x_1, \dots , x_n)\in X^n\colon x_j <_X x_i\}) =0, \tag{${\rm A}3$}
\end{equation}
Conditions \eqref{E:zer1} and \eqref{E:zer3} are needed for the
uniqueness part of the theorem below.

\subsection{Statement of the main result}

Let $S$ be the set of all sequences $\kappa = (k_1, \dots , k_n)$
of elements of ${\mathbb Z}\setminus \{ 0\}$ with property
\eqref{E:gro1}. We say that the natural number $n$ in $\kappa = (k_1, \dots , k_n)$ is the length of $\kappa$ and
denote it by $|\kappa|$.

\begin{theorem}\label{T:LLmain}
Let $\phi$ be a continuous unitary representation of
$L^0(\mu,{\mathbb T})$ on a separable complex Hilbert space $H$.
Consider $H_0$, the orthogonal complement of
\[
\{ v\in H\colon \forall f\in L^0(\mu, {\mathbb T})\; \phi(f)(v) =
v\}.
\]

\noindent {\em Existence}: For $\kappa\in S$ and $i\in {\mathbb
N}$, there exist finite Borel measures $\lambda_\kappa^i$ on
$X^{|\kappa|}$ with properties \eqref{E:zer2},
\eqref{E:zer1}, \eqref{E:zer3}, and with
\begin{equation}\label{E:absas}
\lambda_\kappa^j\ll \lambda_\kappa^i, \hbox{ for }i<j,\tag{${\rm A}4$}
\end{equation}
such that the representation $\phi$ restricted to $H_0$ is
the direct sum of the representations
$\sigma(\kappa, \lambda_\kappa^i)$ with $\kappa\in S$ and $i\in
{\mathbb N}$.

\noindent {\em Uniqueness}: If the restriction of $\phi$ to $H_0$
is presented as the direct sum of $\sigma(\kappa,
(\lambda')^i_{\kappa})$, for $\kappa\in S$ and $i\in {\mathbb N}$,
with \eqref{E:zer2}, \eqref{E:zer1}, \eqref{E:zer3}, and
\eqref{E:absas}, then, for each $i$ and $\kappa$,
$\lambda^i_\kappa$ and $(\lambda')^i_\kappa$ are absolutely
continuous with respect to each other.
\end{theorem}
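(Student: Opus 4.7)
The plan is to combine the spectral theorem for commutative von Neumann algebras with Kwapie{\'n}'s structure theorem and the standard multiplicity theory for such algebras. I describe the existence argument; uniqueness will then follow from the uniqueness built into these tools.

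Since $L^0(\mu,\mathbb T)$ is abelian, the operators $\phi(f)$ generate a commutative von Neumann algebra $\mathcal A$ on $H_0$. The spectral theorem identifies $\mathcal A$ with $L^\infty(Y,\nu)$ for some standard probability space $(Y,\nu)$, with $H_0$ realized as a direct integral $\int_Y^\oplus H_y\,d\nu(y)$ on which each $\phi(f)$ acts by multiplication by a scalar $\chi_y(f)\in\mathbb T$. This yields a continuous group homomorphism
\[
\Psi\colon L^0(\mu,\mathbb T)\to L^0(\nu,\mathbb T),\qquad \Psi(f)(y)=\chi_y(f).
\]
Next, I would lift $\Psi$ to a continuous $\mathbb R$-linear operator $T\colon L^0(\mu,\mathbb R)\to L^0(\nu,\mathbb R)$ with $\Psi(e^{2\pi i g})=e^{2\pi i Tg}$, using that continuous one-parameter subgroups of $L^0(\nu,\mathbb T)$ come from unique elements of $L^0(\nu,\mathbb R)$. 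Kwapie{\'n}'s structure theorem then expresses $T$ in the form $(Tg)(y)=\sum_{i=1}^{n(y)}c_i(y)g(\sigma_i(y))$ with Borel $\sigma_i\colon Y\to X$ and $c_i\colon Y\to\mathbb R$. Because $\Psi$ takes values in $\mathbb T$-valued functions and $f\mapsto f\circ\sigma_i$ must be well defined from $L^0(\mu,\mathbb T)$ to $L^0(\nu,\mathbb T)$, the coefficients $c_i(y)$ must lie in $\mathbb Z\setminus\{0\}$ and the push-forwards $(\sigma_i)_\ast\nu$ must be absolutely continuous with respect to $\mu$. Exponentiating back gives $\chi_y(f)=\prod_{i=1}^{n(y)}f(\sigma_i(y))^{k_i(y)}$.

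I would then partition $Y$ into Borel pieces $Y_\kappa$ indexed by $\kappa\in S$ by two canonical reductions at each $y$: amalgamate any pair of indices with $\sigma_i(y)=\sigma_j(y)$ by summing their exponents (discarding the term if the sum vanishes), and reorder the surviving $(\sigma_i,k_i)$ so that the $k_i$ are non-decreasing and, within each block of equal exponents, the $\sigma_i$ are $<_X$-increasing. The resulting canonical tuples lie in $X^{|\kappa|}$ and realize \eqref{E:zer1} and \eqref{E:zer3} by construction. Inside each $Y_\kappa$, the standard multiplicity decomposition of the commutative von Neumann algebra acting on the corresponding invariant subspace produces a unique decreasing chain of mutually absolutely continuous finite measures $\lambda_\kappa^1\gg\lambda_\kappa^2\gg\cdots$ on $X^{|\kappa|}$ such that the restriction of $\phi$ to that subspace is the direct sum of the $\sigma(\kappa,\lambda_\kappa^i)$. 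Absolute continuity of $(\sigma_i)_\ast\nu$ with respect to $\mu$ gives \eqref{E:zer2}, and the multiplicity construction gives \eqref{E:absas}.

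The hard part will be the Kwapie{\'n} step: converting a continuous group homomorphism $L^0(\mu,\mathbb T)\to L^0(\nu,\mathbb T)$ into a linear operator on the lifted $L^0(\cdot,\mathbb R)$ spaces, and reading off from its canonical form both the integer exponents $k_i(y)$ and the $\mu$-absolute continuity of $(\sigma_i)_\ast\nu$. Once this is in place, uniqueness is a repackaging: any presentation satisfying \eqref{E:zer2}--\eqref{E:absas} reproduces the same spectral measure and multiplicity sequence for $\mathcal A$, and \eqref{E:zer1} together with \eqref{E:zer3} pin down a canonical Borel transversal for the generalized characters, so each $\lambda_\kappa^i$ is determined up to mutual absolute continuity.
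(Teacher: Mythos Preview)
Your existence outline follows the paper's main line closely: spectral theorem, lift along the exponential (the paper cites Herer--Christensen for this), Kwapie{\'n}, then integrality of the exponents. Two comments. First, your ordering is off when you assert $c_i(y)\in\mathbb Z\setminus\{0\}$ \emph{before} amalgamating coincident $\sigma_i(y)$'s: Kwapie{\'n}'s coefficients need not be integers individually, and the paper first partitions $Y$ according to the coincidence pattern of the $\sigma_n$'s, sums coefficients over each coincidence class, and only then proves the resulting $k_n$ are integers by testing on characteristic functions of separating Borel sets. This is indeed the hard step you flag, and the argument is not quite one line. Second, the $\mu$-absolute continuity of $(\sigma_i)_*\nu$ is already part of the conclusion of Kwapie{\'n}'s theorem (non-singularity of the $\sigma_n$), not something you extract from well-definedness of $\Psi$.

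Where you genuinely diverge from the paper is in packaging. You invoke the direct-integral form of the spectral theorem and Hahn--Hellinger multiplicity theory to obtain the chain $\lambda_\kappa^1\gg\lambda_\kappa^2\gg\cdots$ and its uniqueness in one stroke. The paper instead uses the cyclic (multiplicity-one) spectral theorem, produces a single $\sigma(\kappa,\lambda)$-type invariant subspace inside any nontrivial invariant subspace, applies Zorn to get a maximal orthogonal family, and then rearranges the resulting countable list of measures by an explicit Lebesgue-decomposition induction to achieve \eqref{E:absas}. For uniqueness the paper gives a hands-on eigenvalue/admissible-partition argument together with a cyclic-generation count, rather than appealing to abstract multiplicity theory. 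Your route is cleaner once one checks that the map sending a canonical tuple $(\kappa,(x_1,\dots,x_{|\kappa|}))$ (with \eqref{E:zer1} and \eqref{E:zer3}) to the character $f\mapsto\prod f(x_i)^{k_i}$ is a Borel injection into the Gelfand spectrum of $\mathcal A$; this injectivity is exactly the elementary ``Observation'' the paper isolates at the start of its uniqueness proof, and you should make that step explicit. The paper's approach, by contrast, avoids direct integrals and is more self-contained at the cost of a longer uniqueness argument.
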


We point out the following corollary. Recall first that if $\mu$
and $\nu$ are finite Borel measures on a standard Borel space $X$
with $\nu\ll \mu$, then there is the natural surjective
homomorphism $L^0(\mu, {\mathbb T}) \to L^0(\nu, {\mathbb T})$;
simply note that the $\nu$-equivalence class of a Borel function
from $X$ to $\mathbb T$ contains its $\mu$-equivalence class.

\begin{corollary}\label{C:measl}
Let $\mu$ be a Borel probability measure on a standard Borel space
$X$. Let $\phi$ be a continuous unitary representation of
$L^0(\mu, {\mathbb T})$. There exists a finite Borel measure $\nu$
on $X$ such that
\begin{enumerate}
\item[(i)] $\nu\ll\mu$ and $\phi$ is the composition of the
natural homomorphism from $L^0(\mu, {\mathbb T})$ to $L^0(\nu,
{\mathbb T})$ and a continuous unitary representation of $L^0(\nu,
{\mathbb T})$;

\item[(ii)] if $\nu'$ is a finite Borel measure on $X$ with
$\nu'\ll \mu$ and such that $\phi$ is the composition of the
natural homomorphism from $L^0(\mu, {\mathbb T})$ to $L^0(\nu',
{\mathbb T})$ and a continuous unitary representation of
$L^0(\nu', {\mathbb T})$, then $\nu\ll \nu'$.
\end{enumerate}
\end{corollary}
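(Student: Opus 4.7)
The plan is to obtain $\nu$ from the canonical decomposition of $\phi$ furnished by Theorem~\ref{T:LLmain}. Write $\phi$ restricted to $H_0$ as $\bigoplus_{\kappa,i}\sigma(\kappa,\lambda^i_\kappa)$, and set
\[
\nu:=\sum_{\kappa\in S,\, i\in\mathbb N,\, j\le|\kappa|}c_{\kappa,i,j}\,(\pi_j)_*(\lambda^i_\kappa),
\]
with positive weights $c_{\kappa,i,j}$ chosen so that the sum is a finite Borel measure on $X$; by \eqref{E:zer2} one then has $\nu\ll\mu$. To establish (i), observe that $\sigma(\kappa,\lambda^i_\kappa)(f)$ depends on $f$ only through the compositions $f\circ\pi_j$, $j\le|\kappa|$. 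If $f=g$ $\nu$-a.e., then $f=g$ a.e.\ with respect to each marginal $(\pi_j)_*(\lambda^i_\kappa)$, so $f\circ\pi_j=g\circ\pi_j$ $\lambda^i_\kappa$-a.e.\ for all admissible $\kappa,i,j$, and hence $\phi(f)=\phi(g)$ on $H_0$; the equality extends trivially to $H_0^\perp$. Thus $\phi$ factors through the natural continuous surjective homomorphism $L^0(\mu,\mathbb T)\to L^0(\nu,\mathbb T)$, and the factored representation is continuous by the open mapping theorem for Polish groups.

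The harder part is the minimality clause (ii). Given any $\nu'$ satisfying the analog of (i), it suffices to show each marginal $(\pi_j)_*(\lambda^i_\kappa)$ is absolutely continuous with respect to $\nu'$, for then $\nu\ll\nu'$. Fix a Borel $A\subseteq X$ with $\nu'(A)=0$. For every $z\in\mathbb T$ and Borel $B\subseteq A$ the step function $z\cdot 1_B+1_{X\setminus B}$ equals $1$ $\nu'$-a.e., so the factoring hypothesis forces $\phi$ to send it to the identity. Reading this off on the block $\sigma(\kappa,\lambda^i_\kappa)$ and letting $z$ range over $\mathbb T$ yields, for $\lambda^i_\kappa$-a.e.\ $(x_1,\dots,x_n)$ and every Borel $B\subseteq A$, the additive constraint $\sum_{j:\, x_j\in B}k_j=0$. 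To deduce $(\pi_m)_*(\lambda^i_\kappa)(A)=0$ for each fixed $m$, I eliminate atoms of the marginal in $A$ by testing with $B=\{a\}$, a singleton (by \eqref{E:zer1} no other coordinate equals $a$ a.e., so the constraint collapses to $k_m=0$, contradicting $k_m\neq 0$); and I handle the non-atomic part by refining $A$ into Borel partitions whose pieces have diameter tending to $0$ in a fixed compatible Polish metric on $X$, and applying the additive constraint to each piece. Since $k_m\neq 0$, the constraint forces almost every point with $x_m$ in a given small piece to have another coordinate in the same piece; but \eqref{E:zer1} makes the $\lambda^i_\kappa$-measure of such coincidence sets vanish in the limit, yielding the claim.

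The main obstacle is precisely this last refinement argument: the constraints pulled out of the test functions are joint conditions on all coordinates, whereas what is needed is a single-coordinate marginal statement. Condition \eqref{E:zer1} is essential here---without it, a measure concentrated on the diagonal $\{(x,x):x\in X\}$ with $\kappa=(1,-1)$ would satisfy every additive constraint while having marginals not dominated by $\nu'$. The simultaneous use of \eqref{E:zer1} (to suppress coincidences on refined partitions) and $k_m\neq 0$ (to prevent $x_m$ from sitting alone in a $\nu'$-null region) is what makes the extraction of marginal absolute continuity work.
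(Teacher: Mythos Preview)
Your construction of $\nu$ and verification of (i) match the paper's. For (ii) you take a genuinely different route. The paper invokes the \emph{uniqueness} half of Theorem~\ref{T:LLmain}: the factored representation of $L^0(\nu',\mathbb T)$ admits its own presentation $\{(\lambda')^i_\kappa\}$, whose marginals are $\ll\nu'$ by \eqref{E:zer2}; since this is simultaneously a presentation of $\phi$, uniqueness forces $\lambda^i_\kappa\sim(\lambda')^i_\kappa$, and hence every marginal of $\lambda^i_\kappa$ is $\ll\nu'$. Your argument instead reads the vanishing of each marginal on a $\nu'$-null set $A$ directly from the action of the step functions $z\chi_B+\chi_{X\setminus B}$ with $B\subseteq A$, using only the block formula and \eqref{E:zer1}; in particular it bypasses the substantially harder uniqueness proof. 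The paper's route is shorter once Theorem~\ref{T:LLmain} is fully in hand; yours isolates exactly what is needed and would go through even if only the existence half were available.

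One point of care: the clause ``for $\lambda^i_\kappa$-a.e.\ $(x_1,\dots,x_n)$ and every Borel $B\subseteq A$'' asserts a quantifier exchange that is not automatic, since the exceptional null set a priori depends on $B$. Your subsequent argument, however, only applies the constraint to countably many specific $B$'s---singletons at atoms, and the pieces of a fixed sequence of refining countable partitions---so the union of exceptional sets stays null and the proof is sound. It would be cleaner to state the constraint per $B$ and then observe that countably many $B$'s suffice.
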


\begin{proof} We keep the notation from Theorem~\ref{T:LLmain}. Let
$\lambda^j_\kappa$, $\kappa\in S,\, i\in {\mathbb N}$, give a
presentation of $\phi$ restricted to $H_0$. For each $\kappa\in S$
consider the $|\kappa|$ many projections $X^{|\kappa|}\to X$ and
form push-forward measures on $X$ using these projections and
measures $\lambda^j_\kappa$ as $i$ varies over $\mathbb N$. As
$\kappa$ varies over $S$, this procedure gives countably many
finite Borel measures on $X$. Form their weighted sum with
positive coefficients to obtain a finite Borel measure $\nu$ on
$X$. This measure clearly fulfils (i).

To see (ii), fix $\nu'$ as in the assumption. Consider the
continuous unitary representation of $L^0(\nu', {\mathbb T})$ as
in this assumption. The presentation of this representation on
$H_0$ as in Theorem~\ref{T:LLmain} is also a presentation of
$\phi$. By the uniqueness part of Theorem~\ref{T:LLmain}, the
corresponding measures in these two presentations are absolutely
continuous with respect to each other. It follows that the
push-forward of each $\lambda^j_\kappa$ by each projection as in
the previous paragraph is absolutely continuous with respect to
$\nu'$ and (ii) now follows by our definition of $\nu$.
\end{proof}

\subsection{Background results}\label{Su:bac}

We state here three results that will be needed in the sequel. The
first one is a factorization result, the second is the form of the
spectral theorem needed in this paper, and the third is a
structure theorem for certain linear operators.

The first result is implicit in the proof of \cite[Theorem 5]{HC};
see page 208 of \cite{HC}.

\begin{proposition}\label{P1}
Let $\nu$ be a probability measure on a standard Borel space. Let
$G$ be a real topological vector space that is separable and
completely metrizable. Then for each continuous homomorphism
$\phi\colon G\to L^0(\nu, {\mathbb T})$ there exists a continuous
linear operator $\theta\colon G\to L^0(\nu, {\mathbb R})$ such
that
\[
\phi = \exp(2\pi i \theta).
\]
\end{proposition}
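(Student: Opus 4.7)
The plan is to define $\theta$ fiberwise over one-parameter subgroups: for each $f\in G$ I will lift the continuous homomorphism $t\mapsto \phi(tf)$ to obtain $\theta(f)\in L^0(\nu,\mathbb R)$, then verify that the resulting $\theta$ is $\mathbb R$-linear and continuous.

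\textbf{Step 1 (Lifting one-parameter subgroups via Stone's theorem).} For each $f\in G$, the map $\gamma_f\colon\mathbb R\to L^0(\nu,\mathbb T)$ defined by $\gamma_f(t)=\phi(tf)$ is a continuous homomorphism. Letting $L^\infty(\nu)$ act on $L^2(\nu,\mathbb C)$ by multiplication operators $M_u$, the map $t\mapsto M_{\gamma_f(t)}$ is a strongly continuous one-parameter unitary group: convergence in measure of $\gamma_f(t_n)\to\gamma_f(t)$, combined with the pointwise bound $|\gamma_f(t_n)|=1$, gives $L^2$-convergence on every vector by dominated convergence. By Stone's theorem, $M_{\gamma_f(t)}=\exp(2\pi itA_f)$ for some self-adjoint $A_f$. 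Since each $M_{\gamma_f(t)}$ lies in the maximal abelian subalgebra $L^\infty(\nu)\subset B(L^2(\nu,\mathbb C))$, the spectral projections of $A_f$ lie in $L^\infty(\nu)$ as well, so $A_f=M_{h_f}$ for some $h_f\in L^0(\nu,\mathbb R)$. Thus $\gamma_f(t)=\exp(2\pi ith_f)$ in $L^0(\nu,\mathbb T)$, and I set $\theta(f)=h_f$. Uniqueness of $h_f$ is by a divisibility argument: if $\exp(2\pi ith)=\exp(2\pi ith')$ for all $t$, then $(h-h')/n\in L^0(\nu,\mathbb Z)$ for every $n\geq 1$, forcing $h=h'$.

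\textbf{Step 2 (Linearity).} From $\gamma_{f+g}(t)=\gamma_f(t)\gamma_g(t)$ and Step~1, $\exp(2\pi it(h_{f+g}-h_f-h_g))=1$ for all $t\in\mathbb R$, and the uniqueness argument forces $\theta(f+g)=\theta(f)+\theta(g)$. Similarly, $\gamma_{rf}(t)=\gamma_f(rt)$ together with uniqueness yields $\theta(rf)=r\theta(f)$ for every $r\in\mathbb R$.

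\textbf{Step 3 (Continuity).} Both $G$ and $L^0(\nu,\mathbb R)$ are F-spaces (complete, metrizable topological vector spaces), so the Closed Graph Theorem applies to the linear map $\theta$. If $f_n\to f$ in $G$ and $\theta(f_n)\to k$ in $L^0(\nu,\mathbb R)$, continuity of $\exp$ and $\phi$ yields $\exp(2\pi ik)=\phi(f)=\exp(2\pi i\theta(f))$, so $k-\theta(f)\in L^0(\nu,\mathbb Z)$. Applying the same argument with $(f_n/m,f/m)$ in place of $(f_n,f)$ and using linearity gives $(k-\theta(f))/m\in L^0(\nu,\mathbb Z)$ for every $m\geq 1$, whence $k=\theta(f)$. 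Therefore $\theta$ has closed graph and is continuous, and $\phi=\exp(2\pi i\theta)$ by construction.

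The main obstacle is Step~1: producing a genuine element of $L^0(\nu,\mathbb R)$, rather than merely a class modulo $L^0(\nu,\mathbb Z)$, that lifts $t\mapsto\phi(tf)$. The exponential $L^0(\nu,\mathbb R)\to L^0(\nu,\mathbb T)$ is not a local homeomorphism in the convergence-in-measure topology, so standard covering-space path-lifting is unavailable. Stone's theorem circumvents this by realizing the lift intrinsically as the generator of the induced unitary group, and the multiplicative structure of $\gamma_f(t)$ guarantees that this generator is itself a multiplication operator by a real measurable function.
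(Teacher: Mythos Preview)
Your proof is correct. The paper does not actually prove this proposition itself: it states that the result ``is implicit in the proof of \cite[Theorem 5]{HC}; see page 208 of \cite{HC}'' and leaves it at that. So there is no in-paper argument to compare against, and your self-contained proof via Stone's theorem is a legitimate substitute.

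A few remarks on your approach. The key move in Step~1---passing from the one-parameter group $t\mapsto M_{\gamma_f(t)}$ to a multiplication generator---is the standard way to lift homomorphisms $\mathbb R\to L^0(\nu,\mathbb T)$, and your justification that the spectral projections of $A_f$ lie in $L^\infty(\nu)$ (because the von Neumann algebra generated by the unitaries $M_{\gamma_f(t)}$ sits inside the maximal abelian algebra $L^\infty(\nu)$, so $A_f$ is affiliated with it) is correct. The uniqueness and linearity arguments in Steps~1--2 are clean. The closed-graph argument in Step~3 is also fine: both $G$ and $L^0(\nu,\mathbb R)$ are $F$-spaces, and your divisibility trick with $f_n/m$ correctly upgrades $k-\theta(f)\in L^0(\nu,\mathbb Z)$ to $k=\theta(f)$. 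The Herer--Christensen argument the paper cites is somewhat different in flavor (it proceeds more directly with measurable selections and does not invoke Stone's theorem), but your route is arguably more transparent for someone comfortable with spectral theory.
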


The following result is the version of the spectral theorem that
we need later on; see \cite[Proposition 4.7.13]{Pe}.

\begin{proposition}\label{P:sp}
Let $\mathcal T$ be a family of commuting unitary operators on a
separable complex Hilbert space $H$. Then there exist a Borel
probability measure $\nu$ on a standard Borel space and a
surjective isometric operator $U\colon H \to L^2(\nu, {\mathbb
C})$ such that for each $T\in {\mathcal T}$, $UTU^{-1}$ is the
multiplication operator on $L^2(\nu, {\mathbb C})$ by an element
of $L^0(\nu, {\mathbb T})$.
\end{proposition}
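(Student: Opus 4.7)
The plan is to enlarge the commuting family $\mathcal{T}$ to a maximal abelian von Neumann subalgebra of $\mathcal{B}(H)$, extract a cyclic vector, and use a GNS-type construction to realize $H$ concretely as an $L^{2}$ space on which the subalgebra acts by multiplication. Every element of $\mathcal{T}$ is unitary and hence normal, and the elements commute, so the unital $\ast$-algebra they generate is abelian; hence so is its closure $\mathcal{M}$ in the strong operator topology. By Zorn's lemma I would first extend $\mathcal{M}$ to a maximal abelian von Neumann subalgebra $\mathcal{B}\subseteq\mathcal{B}(H)$.

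The central step is producing a cyclic vector for $\mathcal{B}$. Since $H$ is separable, $\mathcal{B}$ is countably decomposable, and a standard exhaustion argument (pick a maximal family of mutually orthogonal projections $p_{i}\in\mathcal{B}$ such that $p_{i}\mathcal{B}$ admits a cyclic vector $\xi_{i}$ on $p_{i}H$, note that by countable decomposability the family is countable, and form $\xi=\sum_{i}2^{-i}\xi_{i}/\|\xi_{i}\|$) yields a separating vector $\xi$ for $\mathcal{B}$. Because $\mathcal{B}$ is maximal abelian, $\mathcal{B}=\mathcal{B}'$, so any separating vector is automatically cyclic. The separability hypothesis on $H$ is used precisely at this point.

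Given such a $\xi$, the state $\omega(T)=\langle T\xi,\xi\rangle$ on $\mathcal{B}$ is faithful and normal, and the prescription $T\xi\mapsto T$ extends, by the density of $\mathcal{B}\xi$ in $H$, to an isometric isomorphism $U\colon H\to L^{2}(\nu,\mathbb{C})$ for some probability measure $\nu$, conjugating $\mathcal{B}$ onto the algebra of multiplication operators by elements of $L^{\infty}(\nu,\mathbb{C})$. Each $T\in\mathcal{T}\subseteq\mathcal{B}$ is unitary, so $UTU^{-1}$ is multiplication by a function of unit modulus, i.e.\ by an element of $L^{0}(\nu,\mathbb{T})$, as required.

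The main obstacle is twofold: the MASA-plus-cyclic-vector step described above, and the arrangement that the underlying measure space be \emph{standard Borel} rather than an arbitrary localizable space. The latter is where I would expect the most care: separability of $H$ forces the predual $\mathcal{B}_{*}\cong L^{1}(\nu)$ to be separable, and a classical measure-theoretic dichotomy (Maharam's theorem together with the isomorphism theorem for separable measure algebras) then allows $\nu$ to be represented by a Borel probability measure on a standard Borel space, after which the isomorphism $U$ can be transported accordingly.
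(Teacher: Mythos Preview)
The paper does not supply its own proof of this proposition; it is quoted as a background result with a reference to \cite[Proposition~4.7.13]{Pe}. Your outline is a correct sketch of the standard argument, so there is no in-paper proof to compare against.

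One presentational remark: the step ``$T\xi\mapsto T$ extends to an isometric isomorphism $U\colon H\to L^{2}(\nu,\mathbb{C})$'' tacitly presupposes that $\mathcal{B}$ has already been identified with $L^{\infty}(\nu)$ for some measure $\nu$, so that the symbol $T$ on the right-hand side denotes the function associated with the operator $T$. You eventually invoke the structure theory of abelian von Neumann algebras with separable predual (via Maharam and the isomorphism theorem for separable measure algebras) to place $\nu$ on a standard Borel space, but the exposition would be cleaner if you invoked that structure theorem \emph{before} defining $U$, rather than producing an unspecified $\nu$ first and repairing it afterward. This is a matter of order, not of substance; the argument is sound.
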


The following theorem of Kwapie{\'n} \cite{K} (see also
\cite[Theorem 8.4]{KPR}) describing the structure of continuous
linear operators from $L^0$ to $L^0$ is important in our
argument. Recall that a measurable function $\sigma$ between two
standard Borel spaces with Borel probability measures is called
{\em non-singular} if preimages under $\sigma$ of measure zero
sets are of measure zero.

\begin{proposition}\label{P:Kwap}
Let $\mu$, $\nu$ be Borel probability measures on standard Borel
spaces $X$ and $Y$, respectively. Let $T\colon L^0(\mu, {\mathbb
R}) \to L^0(\nu, {\mathbb R})$ be a continuous linear function.
Then there exist non-singular maps $\sigma_n\colon Y\to X$ and
$g_n\in L^0(\nu, {\mathbb R})$, with $g_n(x)= 0$ holding for
$\nu$-almost all $x\in Y$  and for all but finitely many $n$, such that for
$f\in L^0(\mu, {\mathbb R})$ we have
\[
T(f) = \sum_{n\in {\mathbb N}} g_n\cdot (f\circ \sigma_n).
\]
\end{proposition}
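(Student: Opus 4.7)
The plan is to encode $T$ as an $L^0(\nu,\mathbb{R})$-valued countably additive measure on $X$, disintegrate it into a measurable family of signed Borel measures $\{\nu_y\}_{y\in Y}$ on $X$, show that $\nu_y$ is finitely supported at $\nu$-a.e.\ $y$, and finally extract the data $\sigma_n, g_n$ by a Borel selection. The decisive step is the finite-support claim, which reflects the trivial dual structure of $L^0$ on a non-atomic base.

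Start by setting $\tau(A) = T(\chi_A)$ for $A \in \mathrm{Bor}(X)$. Finite additivity follows from linearity of $T$; countable additivity in the convergence in measure topology on $L^0(\nu,\mathbb{R})$ follows from continuity of $T$, since $\mu(A_n \triangle A) \to 0$ gives $\chi_{A_n} \to \chi_A$ in $\mu$-measure. Moreover $\mu(A) = 0$ forces $\tau(A) = 0$. Since $X$ is standard Borel, a disintegration argument (fix a countable generating algebra, select representatives on that algebra, and apply Carathéodory extension on a conull $y$-set) yields a measurable family of finite signed Borel measures $\{\nu_y\}_{y \in Y}$, with $\nu_y \ll \mu$, such that $\tau(A)(y) = \nu_y(A)$ at $\nu$-a.e.\ $y$ for every Borel $A \subseteq X$. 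Linear extension through simple functions and passage to limits in $\mu$-measure then gives
\[
T(f)(y) = \int_X f(x) \, d\nu_y(x) \qquad \text{for every } f \in L^0(\mu, \mathbb{R}) \text{ and } \nu\text{-a.e.}\ y.
\]

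The main obstacle is to prove that $\nu_y$ is finitely supported at $\nu$-a.e.\ $y$. Suppose instead that on a set $E \subseteq Y$ of positive $\nu$-measure, either $\nu_y$ has a non-atomic component or it has infinitely many atoms of total mass bounded below by some fixed $\delta > 0$. Using a Rademacher-type randomization over a refining sequence of finite partitions of $X$, one constructs $\{-1, 0, +1\}$-valued $f_m \in L^0(\mu, \mathbb{R})$ whose $\mu$-supports shrink to measure zero but for which the probability, taken over both the random signs and $y \in E$, that $|\int f_m \, d\nu_y| \geq \delta/2$ stays bounded below uniformly in $m$. A Fubini computation then contradicts $T(f_m) \to 0$ in $\nu$-measure, which is forced by continuity of $T$. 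This combinatorial step, exploiting the absence of nontrivial continuous linear functionals on $L^0$ of an atomless measure, is the arithmetic heart of the theorem.

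Having established that $\nu_y = \sum_{n=1}^{N(y)} c_n(y) \delta_{x_n(y)}$ with $N(y) < \infty$ for $\nu$-a.e.\ $y$, apply a Borel selection theorem to the Borel set of tuples $\bigl(y; (x_n, c_n)_n\bigr)$ satisfying $\nu_y = \sum_n c_n \delta_{x_n}$, using an ordering convention (such as decreasing $|c_n|$ with ties broken by a fixed Borel order on $X$) to single out canonical representatives. This yields $\sigma_n \colon Y \to X$ and $g_n \in L^0(\nu, \mathbb{R})$; non-singularity of $\sigma_n$ is inherited from $\nu_y \ll \mu$, and
\[
T(f)(y) = \int f \, d\nu_y = \sum_{n} g_n(y)\, f(\sigma_n(y))
\]
holds on a conull set, giving the desired representation.
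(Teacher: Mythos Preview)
The paper does not prove this proposition; it is quoted as a background result of Kwapie\'n, with references to \cite{K} and \cite[Theorem~8.4]{KPR}, so there is no in-paper argument to compare against. Your outline---encode $T$ as the vector measure $\tau(A)=T(\chi_A)$, disintegrate into scalar set functions $\nu_y$, prove these are finitely supported via a Rademacher-type contradiction with continuity of $T$, then select the atoms---is essentially the route taken in those references.

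However, your write-up contains a genuine error. You assert that the disintegration yields $\nu_y\ll\mu$, and at the end you invoke this to obtain non-singularity of the $\sigma_n$. The claim $\nu_y\ll\mu$ is false, and in fact is incompatible with the conclusion of the theorem: when $\mu$ is atomless, a nonzero finite combination of point masses $\nu_y=\sum_n g_n(y)\,\delta_{\sigma_n(y)}$ is \emph{singular} to $\mu$, not absolutely continuous. What actually holds is only the vector-measure statement that $\mu(A)=0$ implies $\tau(A)=0$ in $L^0(\nu)$, i.e., for each \emph{fixed} $\mu$-null $A$ one has $\nu_y(A)=0$ for $\nu$-a.e.\ $y$, with the exceptional $y$-set depending on $A$; this does not give pointwise absolute continuity for any individual $y$. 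Consequently your last line, ``non-singularity of $\sigma_n$ is inherited from $\nu_y\ll\mu$,'' has no foundation. Non-singularity of $\sigma_n$ on $\{g_n\neq 0\}$ must be argued separately, from the identity $\mu(A)=0\Rightarrow \sum_n g_n\cdot(\chi_A\circ\sigma_n)=0$ $\nu$-a.e., after the atoms have already been isolated and separated. A smaller gap in the same spirit: upgrading the finitely additive data on a countable algebra to genuine signed Borel measures $\nu_y$ via Carath\'eodory requires bounded variation and countable additivity at $\nu$-a.e.\ $y$ simultaneously, and convergence of $\tau(A_k)$ in $\nu$-measure alone does not deliver this; in the standard treatments this step is either handled with additional care or bypassed by working with the finitely additive data until the finite-support claim is in hand.
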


\subsection{Proof of Theorem~\ref{T:LLmain}}\label{Su:prL}

In this proof, when we say ``representation" we mean ``strongly continuous
unitary representation." In the proof, we write $L^2(\nu)$ for $L^2(\nu, {\mathbb C})$.

\smallskip

\noindent {\bf Proof of existence of the presentation.} Assume
we have a representation
\[
\phi\colon L^0(\mu,{\mathbb T}) \to {\mathcal U}(H)
\]
on a separable complex Hilbert space $H$. We divide the proof into
three steps.

{\em Step 1.} We show that for each closed non-trivial subspace $H_1$
included in $H_0$ (as defined in the theorem) and invariant under
the representation, there exists a closed non-trivial subspace
$H'$ of $H_1$ invariant under the representation such that the
representation restricted to $H'$ is of the form
$\sigma(\kappa, \lambda)$ for some $\kappa\in S$ and some
$\lambda$ fulfilling \eqref{E:zer2} and \eqref{E:zer1}. (Condition
\eqref{E:zer3} will be dealt with in Step 2.)

We note that $H_0$ is invariant under the representation as is its
orthogonal complement. Therefore, for simplicity of notation, we
can assume that $H_0=H$ and, in fact, for the
same reason, we assume that $H_1=H$. Since $L^0(\mu,{\mathbb
T})$ is abelian, it follows from the spectral theorem, Proposition~\ref{P:sp},
that there exists a Borel probability measure $\nu$ on a standard Borel
space $Y$ such that the representation $\phi$ is of the form
\[
L^0(\mu, {\mathbb T})\ni f\to N_f\in {\mathcal U}(L^2(\nu))
\]
where
\[
N_f(h) = \psi(f)\cdot h
\]
for some continuous homomorphism
\[
\psi\colon L^0(\mu, {\mathbb T})\to L^0(\nu, {\mathbb T}).
\]
By precomposing $\psi$ with the exponential homomorphism
\[
L^0(\mu, {\mathbb R})\to L^0(\mu, {\mathbb T});\; f\to \exp(2\pi i
f),
\]
we obtain a continuous homomorphism $\psi' \colon L^0(\mu,{\mathbb
R})\to L^0(\nu, {\mathbb T})$. By Proposition~\ref{P1}, there
exists a continuous linear operator
\[
\theta\colon L^0(\mu,{\mathbb R})\to L^0(\nu, {\mathbb R})
\]
such that
\[
\psi' = \exp(2\pi i\theta).
\]

Using Kwapie{\'n}'s theorem, Proposition~\ref{P:Kwap}, applied
to the operator $\theta$, we find $\nu$-measurable functions $g_n\colon Y\to
{\mathbb R}$ and non-singular functions $\sigma_n\colon Y\to X$,
$n\in {\mathbb N}$, such that for $\nu$-almost all $x\in Y$ only
finitely many $g_n(x)$ are non-zero and
\[
\theta(f) = \sum_n g_n \cdot(f\circ\sigma_n).
\]
It follows that for each $f\in L^0(\mu, {\mathbb R})$ we have
\[
\psi(\exp(2\pi i f)) = \exp(2\pi i \sum_n g_n\cdot
(f\circ\sigma_n)).
\]
The above equality implies that for $f\in L^0(\mu, {\mathbb R})$
\begin{equation}\label{E:inva}
f \hbox{ has integer values}\Longrightarrow
\sum_n g_n \cdot (f\circ\sigma_n) \hbox{ has integer values.}
\end{equation}

Now we partition $Y$ up to a $\nu$-measure zero set into countably
many $\nu$-measurable sets $A$ for which there is a finite set
$D\subseteq {\mathbb N}$ depending on $A$ such that for every
$y\in A$
\[
D=\{ n \colon g_n(y)\not= 0 \}
\]
and for all $y,y'\in A$ and $m,n\in D$
\begin{equation}\label{E:gzy}
\sigma_m(y) = \sigma_n(y) \;\Longleftrightarrow\; \sigma_m(y') =
\sigma_n(y').
\end{equation}
This is done as follows. Each finite $D\subseteq {\mathbb N}$
determines a $\nu$-measurable set
\[
A_D = \{ y\in Y\colon \{ n\colon g_n(y)\not=0\} =D\}.
\]
Each $y\in A_D$ induces an equivalence relation $E_y$ on $D$ by
the formula
\[
mE_yn \Longleftrightarrow \sigma_m(y) = \sigma_n(y),\; \hbox{ for } m,n\in D.
\]
There are finitely many equivalence relations on the finite set
$D$. Therefore, we can partition $A_D$ into finitely many
$\nu$-measurable sets by putting $y,y'\in A_D$ in the same set
precisely when $E_y = E_{y'}$. These are the sets $A$ of our
countable partition. It is clear that they are disjoint, they cover each $A_D$, and there is countably many of them.
Note further that with each such set $A$ we
can associate an equivalence relation $E$ on $D$ given by $E=E_y$
for each, equivalently any, $y\in A$.

We fix for a moment $A$, $D$ and $E$ as above. Let $D'\subseteq D$
pick precisely one point from each $E$-equivalence class. Then for
each $f\in L^0(\mu, {\mathbb R})$
\begin{equation}\label{E:inttry}
\sum_{n\in D} g_n \cdot (f\circ\sigma_n)\upharpoonright A = \sum_d
\left( (\sum_{n\in d} g_n)\cdot (f\circ
\sigma_{n_d})\upharpoonright A\right),
\end{equation}
where $d$ runs over the set of all $E$-equivalence classes and
$n_d$ is the unique element of $D'\cap d$. On the set $A$, for
$n\in {\mathbb N}$, define
\begin{equation}\notag
k_n=
\begin{cases}
\sum_{m\in d} g_m,&\text{if $n=n_d$ for an $E$-equivalence class
$d$;}\\
0,&\text{ if $n\not\in D'$.}
\end{cases}
\end{equation}
It follows from \eqref{E:inttry} that
\begin{equation}\label{E:rear}
\sum_n g_n \cdot (f\circ\sigma_n)\upharpoonright A = \sum_{n\in D}
g_n\cdot (f\circ\sigma_n)\upharpoonright A =\sum_{n\in D'}
k_n\cdot (f\circ \sigma_{n})\upharpoonright A.
\end{equation}
Note also that by definition of $E$ and $D'$ (so ultimately by
\eqref{E:gzy}) for distinct $m,n\in D'$ and $\nu$-almost every
$y\in A$, we have
\begin{equation}\label{E:dist}
\sigma_m(y) \not= \sigma_n(y).
\end{equation}

We claim that $A$ can be covered by countably many
$\nu$-measurable sets $A_l$, $l\in {\mathbb N}$, for which there
are $\mu$-measurable sets $B^n_l\subseteq X$ with $n\in D'$ so
that for each $l\in {\mathbb N}$ and for distinct $m,n\in D'$ we
have
\begin{equation}\label{E:och}
B^m_l\cap B^n_l =\emptyset\;\hbox{ and }\; \sigma_n(A_l)\subseteq
B^n_l.
\end{equation}
To see this, cover
\[
X^{D'} \setminus \bigcup_{m\not=n, m,n\in D'}\{ (x_i)_{i\in D'}\in
X^{D'}\colon x_m= x_n\}
\]
with sets enumerated by $l\in {\mathbb N}$ and of the
form
\[
\prod_{m\in D'} B^m_l
\]
with $B^n_l\subseteq X$ $\mu$-measurable and with
\[
B^m_l\cap B^n_l = \emptyset
\]
for all $l$ and for distinct $m,n\in D'$. Now let
\[
A_l = \bigcap_{n\in D'}\sigma_n^{-1}(B^n_l).
\]
Condition \eqref{E:dist} ensures that $A$ is covered by the sets
$A_l$. These sets are easily seen to be as required by \eqref{E:och}.

Fix $l\in {\mathbb N}$ and $n_0\in D'$. Since the function
$\chi_{B^{n_0}_l}\in L^0(\mu, {\mathbb R})$ has integer values, by \eqref{E:inva}, we see that
\[
\sum_n g_n \cdot (\chi_{B^{n_0}_l}\circ\sigma_n)
\]
has integer values as well. But by \eqref{E:rear} and by
\eqref{E:och}, we have
\[
\sum_n g_n\cdot (\chi_{B^{n_0}_l}\circ\sigma_n)\upharpoonright A_l
= \sum_{n\in D'} k_n\cdot (\chi_{B^{n_0}_l}\circ
\sigma_{n})\upharpoonright A_l = k_{n_0}\upharpoonright A_l.
\]
Thus, $k_{n_0}\upharpoonright A_l$ has integer values. Since this
is true for each $l\in {\mathbb N}$, we see that
$k_{n_0}\upharpoonright A$ has integer values. This statement
holds for all $n_0\in D'$. Since $k_n\upharpoonright A=0$ for
$n\not\in D'$, we see that all $k_n$ have integer values on $A$.

Since the sets $A$ partition $Y$ up to a set of $\nu$-measure
zero, the functions $k_n$ are defined $\nu$-almost everywhere, they are
integer valued, and, from \eqref{E:rear}, they fulfill 
\[
\sum_n g_n \cdot (f\circ\sigma_n) = \sum_n k_n \cdot (f\circ
\sigma_{n}).
\]
It follows that
\begin{equation}\label{E:prod}
\psi(\exp(2\pi i f)) = \prod_n \exp(2\pi i k_n\cdot
(f\circ\sigma_n)) =\prod_n (\exp(2\pi i f)\circ \sigma_n)^{k_n}.
\end{equation}

Finally, partition $A$ into countably many $\nu$-measurable sets $B$
such that each $k_n$ with $n\in D'$ is constant on each of these
sets. Then by \eqref{E:dist} and \eqref{E:prod} it follows that the
partition of $Y$ into such sets $B$ gives a decomposition of
$L^2(\nu)$ into orthogonal subspaces of the form $L^2(\nu \upharpoonright B)$ that are invariant under
the representation and are such that the representation on each of
them has the following form. There exist a sequence $k_1\leq
\cdots \leq k_n$ of integers and Borel non-singular functions $\sigma_i \colon
B\to X$, $i\leq n$, with
\begin{equation}\label{E:idn}
\sigma_i(y)\not= \sigma_j(y),\hbox{ for }i\not= j \hbox{ and }y\in B
\end{equation}
such that under the representation the operator associated with
$f\in L^0(\mu, {\mathbb T})$ is given by
\[
L^2(\nu\upharpoonright B)\ni h\to (\prod_{i\leq n}(f\circ
\sigma_i)^{k_i})h.
\]

Since the representation is assumed to be non-trivial (as $H_1$ is assumed non-trivial), there is at least one set $B\subseteq Y$ in the decomposition above
such that the sequence $k_1, \dots,
k_n$ associated with it is not constantly equal to $0$. Let $\kappa = (l_1, \dots, l_m)$ be obtained from
$k_1, \dots, k_n$ by deleting all $0$-s, and let $l_1\leq \cdots
\leq l_m$. Note that $\kappa\in S$. Further, let $\tau_1, \dots,
\tau_m$ list the $\sigma_i$-s with $i$-s not corresponding to
$k_i=0$. Note that
\[
\tau= (\tau_1, \dots , \tau_m): B\to X^{|\kappa|}.
\]
Let $\lambda$ be the measure on $X^{|\kappa|}$ obtained from $\mu$
by pushing it forward by $\tau$. Note that non-singularity of each
$\tau_i$ implies that condition~\eqref{E:zer2} holds. Condition~\eqref{E:idn} implies~\eqref{E:zer1}.
Consider the closed space of all elements of $L^2(\nu\upharpoonright B)$
that are constant on the preimages under $\tau$ of
$\lambda$ almost all points in $X^{|\kappa|}$.
(One makes this statement precise, as usual, by disintegrating $\nu \upharpoonright B$ with respect to $\tau$.)
Note that this space is invariant under the
representation, is non-trivial, and the representation
on it is of the form $\sigma(\kappa,\lambda)$.

Thus, as required, we produced a non-trivial subspace $H'$ that is
invariant under the representation and the representation
restricted to $H'$ is unitarily equivalent to $\sigma(\kappa, \lambda)$.

{\em Step 2.} We show here that $\phi$ restricted to $H_0$ is a direct sum of countably many representation of
the form $\sigma(\kappa, \lambda)$ for some $\kappa\in S$ and
$\lambda$ with \eqref{E:zer2}, \eqref{E:zer1}, and also \eqref{E:zer3}.
First, note that Zorn's lemma allows us to pick a maximal
family $\mathcal F$ of mutually orthogonal non-trivial subspaces
$H'$ of $H_0$ such that the representation restricted to each $H'$ is
of the form $\sigma(\kappa, \lambda)$ for $\kappa\in
S$ and $\lambda$ fulfilling \eqref{E:zer2} and \eqref{E:zer1}. By
separability of $H$, $\mathcal F$ is countable. By Step 1,
$\mathcal F$ spans $H_0$. It will suffice, therefore, to represent
each $\sigma(\kappa, \lambda)$ with $\lambda$ fulfilling
\eqref{E:zer2} and \eqref{E:zer1} as a finite direct sum of
representations $\sigma(\kappa, \lambda')$, where $\lambda'$
fulfils \eqref{E:zer2}, \eqref{E:zer1}, and \eqref{E:zer3}.

Fix $\kappa = (k_1, \dots , k_n)$ and $\lambda$ with \eqref{E:zer2} and \eqref{E:zer1}. Let $S_\kappa$
consist of all permutations $\rho$ of $\{ 1, \dots , n\}$ such
that for $1\leq i, j\leq n$
\begin{equation}\label{E:kso}
\rho(i) = j \Longrightarrow k_i=k_j.
\end{equation}
For $\rho\in S_\kappa$, let
\[
X^\rho = \{ (x_1, \dots, x_n)\in X^n\colon x_{\rho(i)}<_X
x_{\rho(j)} \hbox{ for all }i, j\hbox{ with }k_i=k_j\}.
\]
Note that $L^2(\lambda\upharpoonright X^\rho)$ is invariant under
the representation $\sigma(\kappa, \lambda)$. Since $\lambda$
fulfills \eqref{E:zer1}, it follows that $\sigma(\kappa, \lambda)$
is the direct sum of the representations $\sigma(\kappa,
\lambda\upharpoonright X^\rho)$ with $\rho$ varying over
$S_\kappa$. For $\rho\in S_\kappa$, let ${\bar \rho}\colon X^\rho\to
X^{\rm id}$, where $\rm id$ is the identity permutation, be given by
\[
{\bar \rho}(x_1, \dots , x_2) = (x_{\rho(1)}, \dots, x_{\rho(n)}).
\]
From \eqref{E:kso} it is clear that $\sigma(\kappa,
\lambda\upharpoonright X^\rho)$ can be replaced by $\sigma(\kappa,
{\bar \rho}_*(\lambda)\upharpoonright X^{\rm id})$, and so
$\sigma(\kappa, \lambda)$ is the direct sum of the representations
$\sigma(\kappa, {\bar \rho}_*(\lambda)\upharpoonright X^{\rm id})$ with
$\rho$ varying over $S_\kappa$. It is also clear that the measure
${\bar \rho}_*(\lambda)\upharpoonright X^{\rm id}$ fulfills
\eqref{E:zer3}, as well as \eqref{E:zer1} and \eqref{E:zer2}.
Thus, the conclusion follows.

{\em Step 3.} We now assume that the representation $\phi$ when
restricted to $H_0$ is the direct sum as described in Step 2. We
show how to modify this direct sum so that it fulfills
\eqref{E:absas} as in the conclusion of the theorem. Fix
$\kappa\in S$. Let $\lambda^j$, $i< m$, list all non-zero measures
on $X^{|\kappa|}$ appearing in $\sigma(\kappa, \lambda^j)$ in the
direct sum given by Step 2. Here $m\in {\mathbb N}\cup\{ \infty\}$. We can,
and we do, assume that each $\lambda^j$ is a probability measure. We will
use the following general and easy observation. Assume we have
finite Borel measures
\[
\nu_{i-1}\ll \cdots \ll \nu_2\ll \nu_1
\]
on a standard Borel space and another finite Borel measure $\mu$
on the same space. Then $\mu = \mu_1+\cdots +\mu_i$, where
\begin{equation}\notag
\begin{split}
&\mu_j\ll \nu_{j-1}, \hbox{ for }2\leq j\leq i;\\
&\mu_j\perp \nu_j, \hbox{ for }1\leq j\leq i-1;\\
&\mu_j\perp \mu_{j'}, \hbox{ for }1\leq j, j'\leq i, j\not=j'.
\end{split}
\end{equation}
Using this observation, by induction on $i$, we find finite Borel measures $\lambda^j_j$
with $j\leq i$ so that the following conditions hold
\begin{enumerate}
\item[(a)] $\lambda^i= \lambda^i_1 + \cdots + \lambda^i_i$;

\item[(b)] $\lambda^i_i\ll\cdots \ll \lambda_2^2 + \cdots +
\lambda_2^i\ll \lambda_1^1 + \lambda_1^2+ \cdots +\lambda_1^i$;

\item[(c)] $\lambda_j^i\perp (\lambda^j_j+\cdots +
\lambda^{i-1}_j),\; \hbox{ for }j<i$;

\item[(d)] $\lambda^i_j\perp \lambda^i_{j'},\;\hbox{ for }
j,j'\leq i,\, j\not= j'$.
\end{enumerate}
Note that condition (b) for $i-1$ is used as an inductive
assumption and is maintained in the induction by the first
condition in the general observation above.

Let now $\lambda_\kappa^j$ for $j<m$ be the measure
\[
\lambda^j_j + 2^{-1}\lambda^{j+1}_j + 2^{-2}\lambda^{j+2}_j
+\cdots.
\]
This is a finite measure by (a) since each $\lambda^i$ was assumed
to be a probability measure. Set also $\lambda^j_\kappa=0$ for
$j\in {\mathbb N}$ and $j\geq m$. By conditions (a) and (d), the
direct sum of the representations $\sigma(\kappa, \lambda^i)$ for
$i\in {\mathbb N}$ is the direct sum of
the representations $\sigma(\kappa, \lambda^i_j)$ for $j\leq i$,
$i\in {\mathbb N}$. By condition (c) and the definition of
$\lambda^j_\kappa$, this latter direct sum is also
the direct sum of the representations $\sigma(\kappa,
\lambda^j_\kappa)$ for $j\in {\mathbb N}$. Condition (b) ensures
that $\lambda^{j'}_\kappa\ll \lambda^j_\kappa$ for $j'>j$, that
is, \eqref{E:absas} holds. Thus, the measures $\lambda_\kappa^j$,
$\kappa\in S$, $j\in {\mathbb N}$, are as required.

\smallskip

\noindent {\bf Proof of uniqueness of the presentation.}
For subsets $P,\, Q$ of $X$, we write
\[
P<_X Q
\]
if $x<_X y$ for all $x\in P$ and $y\in Q$.

We will use the following elementary observation, whose
justification we leave to the reader.

\noindent Observation. {\em Let $\kappa= (k_1, \dots , k_m)$ and $\kappa' = (l_1,
\dots , l_n)$ be in $S$. Let $q\in {\mathbb N}$ and let
\[
u\colon \{ 1, \dots , m\} \to \{ 1, \dots, q\}\hbox{ and } v\colon
\{ 1, \dots , n\} \to \{ 1, \dots, q\}
\]
be injective. Assume that for all $z_1, \dots, z_q\in {\mathbb T}$,
we have
\[
z_{u(1)}^{k_1}\cdots z_{u(m)}^{k_m} =  z_{v(1)}^{l_1}\cdots
z_{v(n)}^{l_n}.
\]
Then $m=n$ and for each $r\in \{ 1, \dots, q\}$
\[
\{ u(i)\colon k_i = r\} = \{ v(i)\colon l_i = r\};
\]
therefore, since $\kappa, \kappa'\in S$, $\kappa=\kappa'$. It follows that, if additionally for all $i<j\leq m$
\[
k_i=k_j\Rightarrow u(i)<u(j)\;\hbox{ and }\; l_i=l_j\Rightarrow v(i)<v(j),
\]
then $u=v$. }

We will also consider $X$ equipped with the order topology induced by $<_X$, which
is assumed to be second countable and compact.

Assume we are given a representation $\phi$ of $L^0(\mu, {\mathbb
T})$ on a separable Hilbert space $H$. Assume that we have two
presentations of the restriction of $\phi$ to $H_0$ given by
$\lambda^j_\kappa$ and by $(\lambda')^j_\kappa$, for $i\in
{\mathbb N}$ and $\kappa\in S$. Assume further towards a contradiction
that these two presentations do not fulfill the uniqueness
criterion from the theorem. Thus, there exist $\kappa$, $j$, and a
Borel set $K\subseteq X^{|\kappa|}$ whose measure is positive with
respect to one of the measures $\lambda^j_\kappa$,
$(\lambda')^j_\kappa$ and is zero with respect to the other. Fix
such a $\kappa$ and such a $j$. They will be called $\kappa_0$ and
$j_0$, respectively. Let $\kappa_0$ be equal to $(k_1, k_2, \dots,
k_n)$; in particular, $|\kappa_0|=n$. Without loss of generality,
we can assume that for all $j<j_0$, we have
$\lambda^j_{\kappa_0}\sim (\lambda')^j_{\kappa_0}$, that
\begin{equation}\label{E:Kineq}
\lambda^{j_0}_{\kappa_0}(K)>0\;\hbox{ and }\;
(\lambda')^{j_0}_{\kappa_0}(K)=0,
\end{equation}
and, by using \eqref{E:zer1} and \eqref{E:zer3}, that
\begin{equation}\label{E:Kin}
K\subseteq \{ x\in X^n\colon x_1<_X x_2<_X\cdots <_X x_n\}.
\end{equation}
We can also assume, by shrinking $K$ if necessary, that $K$ is compact
in the product topology on $X^n$.

A partition $\mathcal P$ of $X$ into Borel sets will be called
{\em admissible} if $P<_X Q$ or $Q<_X P$ for distinct $P,Q\in
{\mathcal P}$ and
\[
K\subseteq \bigcup_u \prod_{i=1}^nu(i),
\]
where $u$ varies over the set of all functions $u\colon \{ 1, \dots, n\} \to {\mathcal P}$
with
\begin{equation}\label{E:vv}
u(i_1)<_X u(i_2), \hbox{ for }1\leq i_1<i_2\leq n.
\end{equation}
Using \eqref{E:Kin} and compactness of $K$,
we see that
there exists an admissible partition. Note that a partition finer
than an admissible one is also admissible. Further, for an admissible partition $\mathcal P$ and $j\in {\mathbb N}$, put
\[
U^j_{\mathcal P} = \{ u\colon \{ 1, \dots, n\} \to {\mathcal P}\colon u \hbox{ fulfills \eqref{E:vv} and } \lambda^j_{\kappa_0}(K\cap \prod_{i=1}^n
u(i))>0\} .
\]
With a sequence $z_{\mathcal P} = (z_P\in {\mathbb T}\colon P\in
{\mathcal P})$, we associate the unitary operator
\begin{equation}\label{E:bigf}
A_{{\mathcal P}, z_{\mathcal P}} = \phi(\sum_{P\in {\mathcal P}}
z_P\chi_P).
\end{equation}

Define $H_K$ to be the closure of the set of all $h\in H$ with the
following property. For every admissible partition ${\mathcal P}$
of $X$, one can represent $h$ as
\begin{equation}\label{E:bigs}
h = \sum_{u\in U_{\mathcal P}}h_u,
\end{equation}
where, for every $z_{\mathcal P}$, $h_u$ is an eigenvector of $A_{{\mathcal P}, z_{\mathcal
P}}$ with eigenvalue
\begin{equation}\label{E:bige}
\prod_{i=1}^n z_{u(i)}^{k_i}.
\end{equation}

We claim that when $\phi$ is viewed as the
direct sum of $\sigma(\kappa, \lambda^j_\kappa)$ for $j\in {\mathbb N}$
and $\kappa\in S$, then $H_K$ is the direct sum of
$L^2(\lambda^j_{\kappa_0}\upharpoonright K)$ over $j\in {\mathbb
N}$, and when $\phi$ is viewed as the
direct sum of $\sigma(\kappa, (\lambda')^j_\kappa)$, then $H_K$ is the direct sum of
$L^2((\lambda')^j_{\kappa_0}\upharpoonright K)$ over $j\in
{\mathbb N}$. We write out a proof only for $\lambda^j_\kappa$. In it, we identify $H$ with the direct
sum of $L^2(\lambda^j_\kappa)$ over $j\in {\mathbb N}$ and $\kappa\in S$.
That the direct sum of $L^2(\lambda^j_{\kappa_0}\upharpoonright K)$, $j\in {\mathbb N}$, $\kappa\in S$, is included in $H_K$ is not difficult to check
and we leave it to the reader. We show the other inclusion. Assume
for contradiction that it does not hold. Then we have an element
$h$ of $H_K$ whose projection on $L^2(\lambda^j_{\kappa})$ is
non-zero for some $j$ and some $\kappa\not= \kappa_0$ or whose
projection on $L^2(\lambda^j_{\kappa_0})$, for some $j$, has
support not included in $K$. Let $A\subseteq X^{|\kappa|}$ be the
support of this projection. Set $m=|\kappa|$ and $\kappa = (l_1,
\dots , l_m)$. By condition \eqref{E:zer1} and by compactness of $K$, there is an admissible
partition $\mathcal P$ of $X$ and an injective function $v\colon
\{ 1, \dots, m\}\to {\mathcal P}$ such that
\begin{equation}\label{E:possu}
\lambda^j_\kappa\big( A\cap (v(1)\times \cdots \times v(m))\big) >0,
\end{equation}
and either $\kappa\not= \kappa_0$ or ($\kappa=\kappa_0$ and  $v\not\in U^j_{\mathcal P}$). Note that, by \eqref{E:zer3} and by
\eqref{E:possu}, $v$ satisfies for all $1\leq i_1<i_2\leq m$
\begin{equation}\label{E:ratu}
l_{i_1}=l_{i_2}\Longrightarrow v(i_1)<_X v(i_2).
\end{equation}

Now if $h$ is represented as a sum as in \eqref{E:bigs} for the
$\mathcal P$ found above, then there exists an $h_u$, for some $u\in U_{\mathcal P}$, whose projection on
$L^2(\lambda^{j}_{\kappa})$ has support intersecting $v(1)\times
\cdots \times v(m)$ on a set of $\lambda^j_\kappa$ positive measure. Now,
$h_u$ is an eigenvector of \eqref{E:bigf} for every $z_{\mathcal P}$. Its eigenvalue for a given $z_{\mathcal P}$ must be
equal to
\[
\prod_{i=1}^{m} z_{v(i)}^{l_i}
\]
since every value of a function from $L^2(\lambda^{j}_{\kappa})$
attained on $v(1)\times \cdots \times v(m)$ is multiplied by that
number when the function is acted on by \eqref{E:bigf}. On the other hand, this
eigenvalue is also equal to \eqref{E:bige} for the $u$ found above. Now, using the observation from the beginning
of the proof of uniqueness and using \eqref{E:ratu} and \eqref{E:vv} for $u$, we see that $\kappa =
\kappa_0$ and $v=u$, so $v\in U^j_{\mathcal P}$, contradiction.
Thus, $H_K$ is the direct sum of
$L^2(\lambda^j_{\kappa_0}\upharpoonright K)$ over $i\in {\mathbb
N}$. Similarly, we get that it is the direct sum of
$L^2((\lambda')^j_{\kappa_0}\upharpoonright K)$ over $j\in
{\mathbb N}$.

Using the presentation of $H_K$ as a direct sum with respect to $(\lambda')^j_\kappa$,
$\kappa\in S$, $j\in {\mathbb N}$, we show that there are $j_0-1$
vectors such that $H_K$ is the smallest closed subspace containing
these vectors and invariant under $L^0(\mu, {\mathbb T})$. Take a
copy of $\chi_K$ in each $L^2((\lambda')^j_{\kappa_0})$ for
$j<j_0$. Note that vectors obtained from each of the $j_0-1$ copies of $\chi_K$ by
acting on them by elements of $L^0(\mu, {\mathbb T})$ separate points of $K$. So the
smallest closed subspace $H'$ containing all these vectors contains
each $L^2((\lambda')^j_{\kappa_0}\upharpoonright K)$ for $j<j_0$.
Since $(\lambda')^j_{\kappa_0}(K)=0$ for $j\geq j_0$, we see that $H'$
contains the direct sum of $L^2((\lambda')^j_{\kappa_0}\upharpoonright K)$ over all $j$, that is,
by what was proved above, it is equal to $H_K$.

On the other hand, using the presentation of $H_K$ as a direct sum with respect to
$\lambda^j_\kappa$, $\kappa\in S$, $j\in {\mathbb N}$, we show
that given $j_0-1$ vectors $f_j$, $j<j_0$, in $H_K$ the closed subspace $H'$ spanned by
all the vectors obtained from $f_j$, $j<j_0$, by acting on them by $L^0(\mu, {\mathbb
T})$ is a proper subspace of $H_K$. First note that since $H_K$ is the direct sum of
$L^2(\lambda^j_{\kappa_0}\upharpoonright K)$, for $j\in {\mathbb N}$,
with each element $h\in H_K$ we can associate a sequence $h^j$, $j\in {\mathbb N}$,
with each $h^j$ being a $\lambda^j_{\kappa_0}$ class of
a function from $K$ to $\mathbb C$. Since $\lambda^{j_0}_{\kappa_0}\ll \lambda^j_{\kappa_0}$ for $j\leq j_0$, we
see that $h^j$, for $j\leq j_0$, determines a single $\lambda^{j_0}_{\kappa_0}$ function class, which we again denote by $h^j$.
Then we define $\overline{h}\in L^2(\lambda^{j_0}_{\kappa_0}\upharpoonright K, {\mathbb C}^{j_0})$ by letting for $\lambda^{j_0}_{\kappa_0}$
almost every $x\in K$
\[
\overline{h}(x) = (h^1(x), \dots, h^{j_0}(x)) \in {\mathbb C}^{j_0}.
\]
Now, for $\lambda^{j_0}_{\kappa_0}$ almost every $x\in K$, let $V_x$ be the linear
subspace of ${\mathbb C}^{j_0}$ spanned by $\overline{f_1}, \dots , \overline{f_{j_0-1}}$. Note that
the function $K\ni x\to V_x$ is $\lambda^{j_0}_{\kappa_0}$ measurable and that the dimension of $V_x$ does
not exceed $j_0-1$. Observe that for $g\in L^0(\mu, {\mathbb T})$ and $h\in H_K$,
\[
\overline{\phi(g)(h)}(x) = z_x\bar{h}
\]
for some $z_x\in{\mathbb C}$ for $\lambda^{j_0}_{\kappa_0}$ almost every $x\in K$. It
follows that for each $h\in H'$, we have
\begin{equation}\label{E:ins}
\overline{h}(x)\in V_x,\hbox{ for }\lambda^{j_0}_{\kappa_0}\hbox{ almost every }x\in K.
\end{equation}
Since the dimension of $V_x$ is less than that of ${\mathbb C}^{j_0}$, we have that ${\mathbb C}^{j_0}\setminus V_x$
is non-empty for $\lambda^{j_0}_{\kappa_0}$ almost every $x\in K$. Thus, by the Jankov--von Neumann selection
theorem, see \cite[Theorem 18.1]{Ke}, there is a bounded $\lambda^{j_0}_{\kappa_0}$ measurable function
$F\colon K\to {\mathbb C}^{j_0}$ such that
\begin{equation}\label{E:rg}
F(x)\not\in V_x, \hbox{ for }\lambda^{j_0}_{\kappa_0}\hbox{ almost every }x\in K.
\end{equation}
We easily see that there is $f\in H_K$ with
\[
\overline{f} = F.
\]
It follows from \eqref{E:ins}, \eqref{E:rg}, and the fact that $\lambda^{j_0}_{\kappa_0}(K)>0$ that $f$ is not in $H'$.
Thus, $H'$ is a proper subspace of $H_K$.
This conclusion yields
a contradiction and completes the proof of uniqueness of the
presentation.

\section{Unitary representations of $C(M, {\mathbb T})$}

In this section, $M$ is a second countable,  compact, zero-dimensional space.
We also fix a linear order $<_M$ on $M$ such that the order topology induced by it
is compact and second countable and has the same Borel sets as the original topology on $M$.
In fact, since $M$ is zero-dimensional, it can be viewed as a subset of $\{ 0,1\}^{\mathbb N}$,
see \cite[Theorem 7.8]{Ke}, and the order $<_M$ can be defined to be the pull-back of the lexicographic order
on $\{ 0,1\}^{\mathbb N}$. Then the order topology induced by $<_M$ is equal to the original topology on $M$.

\subsection{Description of representations}\label{Su:des}
The description here is essentially the one from
Subsection~\ref{Su:des2} except that, obviously, we do not have
a condition analogous to \eqref{E:zer2}. We keep the piece of notation $S$
standing for the set of all $\kappa = (k_1, \dots , k_n)$ of
elements of ${\mathbb Z}\setminus \{ 0\}$ with $k_1\leq k_2\leq
\cdots \leq k_n$.

Given $\kappa = (k_1, \dots , k_n)\in S$ and a finite Borel
measure $\lambda$ on $M^{n}$, we consider the representation of
$C(M,{\mathbb T})$ on $L^2(\lambda, {\mathbb C})$ given by:
\[
C(M, {\mathbb T})\ni f\to U_f\in {\mathcal U}(L^2(\lambda, {\mathbb C})),
\]
where for $h\in L^2(\lambda, {\mathbb C})$
\[
U_f(h) = (\prod_{i\leq n} (f\circ \pi_i)^{k_i}) h,
\]
where, for $i\leq n$, $\pi_i\colon M^n\to M$ is the projection on
the $i$-th coordinate. We denote this representation again by
$\sigma(\kappa, \lambda)$.

For $\kappa\in S$ with $n= |\kappa|$ and a finite Borel measure
$\lambda$ on $M^n$, we will consider the following two conditions:
for all $1\leq i<j\leq n$
\begin{equation}\label{E:zerc}
\lambda(\{ (x_1, \dots , x_n)\in M^n\colon x_i=x_j\}) =0 \tag{B1}
\end{equation}
and, for $1\leq i<j\leq n$ with $k_i=k_j$,
\begin{equation}\label{E:zerc2}
\lambda(\{ (x_1, \dots , x_n)\in M^n\colon x_j <_M x_i\}) =0. \tag{B2}
\end{equation}

\subsection{Statement of the theorem}\label{Su:res}

\begin{theorem}\label{T:CCmain}
Let $M$ be a compact, second countable, zero-dimensional space.
Let $\phi$ be a continuous unitary representation of
$C(M,{\mathbb T})$ on a separable complex Hilbert space $H$.
Consider $H_0$, the orthogonal complement of
\[
\{ v\in H\colon \forall f\in C(M, {\mathbb T})\; \phi(f)(v) =
v\}.
\]
\noindent {\rm Existence.} For each $\kappa\in S$, there exist
finite Borel measures $\lambda_\kappa^i$, $i\in {\mathbb N}$, on
$M^{|\kappa|}$ with properties \eqref{E:zerc}, \eqref{E:zerc2},
and with
\begin{equation}\label{E:itsl}
\lambda_\kappa^j\ll \lambda_\kappa^i, \hbox{ for }i<j, \tag{B3}
\end{equation}
such that the representation $\phi$ restricted to $H_0$ is
the direct sum of the representations $\sigma(\kappa,
\lambda_\kappa^i)$ with $\kappa\in S$ and $i\in {\mathbb N}$.

\noindent {\rm Uniqueness.} If the restriction of $\phi$ to $H_0$
is presented as the direct sum of $\sigma(\kappa,
(\lambda')^i_{\kappa})$, for $\kappa\in S$, $i\in {\mathbb N}$,
with \eqref{E:zerc}, \eqref{E:zerc2}, and \eqref{E:itsl}, then
$\lambda^i_\kappa$ and $(\lambda')^i_\kappa$ are absolutely
continuous with respect to each other for all $\kappa\in S$ and
$i\in {\mathbb N}$.
\end{theorem}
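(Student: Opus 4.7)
The plan is to reduce Theorem~\ref{T:CCmain} to Theorem~\ref{T:LLmain} by showing that every continuous unitary representation of $C(M,{\mathbb T})$ factors through a continuous unitary representation of $L^0(\mu,{\mathbb T})$ for a suitable Borel probability measure $\mu$ on $M$, and then reading off the classification and uniqueness from the $L^0$ case.

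For existence, I would first apply the spectral theorem (Proposition~\ref{P:sp}) to the commuting family $\phi(C(M,{\mathbb T}))$, realizing $H$ as $L^2(\nu)$ with $\phi(f)$ acting by multiplication by $\psi(f)\in L^0(\nu,{\mathbb T})$ for a continuous homomorphism $\psi:C(M,{\mathbb T})\to L^0(\nu,{\mathbb T})$. Composing with the exponential and invoking Proposition~\ref{P1} yields a continuous linear operator $\theta:C(M,{\mathbb R})\to L^0(\nu,{\mathbb R})$ with $\psi(\exp(2\pi i f))=\exp(2\pi i\theta(f))$. The crucial step is then to apply a Nikishin--Maurey style factorization theorem (of the kind found in Pisier's book \cite{P}) to produce a Borel probability measure $\mu$ on $M$ and a continuous linear $\tilde\theta:L^0(\mu,{\mathbb R})\to L^0(\nu,{\mathbb R})$ with $\theta=\tilde\theta\circ\iota$, where $\iota:C(M,{\mathbb R})\to L^0(\mu,{\mathbb R})$ is the canonical map. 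The integer-valued-function argument at the heart of Step~1 in the proof of Theorem~\ref{T:LLmain} then shows that $\exp(2\pi i\tilde\theta)$ descends to a continuous homomorphism $\tilde\psi:L^0(\mu,{\mathbb T})\to L^0(\nu,{\mathbb T})$ extending $\psi$, so $\phi$ factors through a continuous unitary representation $\tilde\phi$ of $L^0(\mu,{\mathbb T})$. I would then apply Theorem~\ref{T:LLmain} to $\tilde\phi$ with the order $<_X$ chosen to coincide with $<_M$, obtain measures $\lambda_\kappa^i$ on $M^{|\kappa|}$ satisfying \eqref{E:zer2}, \eqref{E:zer1}, \eqref{E:zer3}, and \eqref{E:absas}, and simply drop \eqref{E:zer2} to obtain the required \eqref{E:zerc}, \eqref{E:zerc2}, and \eqref{E:itsl}. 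Note that the subspaces $H_0$ agree for $\phi$ and $\tilde\phi$ because $C(M,{\mathbb T})$ has dense image in $L^0(\mu,{\mathbb T})$ and $\phi$ is strongly continuous.

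For uniqueness, I would mimic the uniqueness argument given in the proof of Theorem~\ref{T:LLmain}. The key point making this transfer smooth is the zero-dimensionality of $M$: the clopen sets form a basis for the topology, so admissible partitions of $M$ can be chosen to consist of clopen pieces, and the operators $A_{{\mathcal P},z_{\mathcal P}}=\phi(\sum_{P\in{\mathcal P}}z_P\chi_P)$ are then defined by bona fide elements of $C(M,{\mathbb T})$. With this understood, the construction of $H_K$, the eigenvalue analysis based on the monomial observation on ${\mathbb T}$, and the concluding argument via the Jankov--von Neumann selection theorem (Theorem 18.1 of \cite{Ke}) all go through essentially verbatim, yielding mutual absolute continuity of corresponding measures $\lambda_\kappa^i$ and $(\lambda')_\kappa^i$.

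The main obstacle is isolating the right factorization theorem for linear operators $C(M,{\mathbb R})\to L^0(\nu,{\mathbb R})$ and verifying that the factoring operator $\tilde\theta$ is compatible enough with the exponential structure so that descent from $L^0(\mu,{\mathbb R})$ to $L^0(\mu,{\mathbb T})$ really produces a continuous homomorphism of topological groups; once that step is in place, existence reduces to an invocation of Theorem~\ref{T:LLmain} and uniqueness becomes a routine adaptation of its counterpart, with the zero-dimensionality of $M$ replacing the Borel-measurable-partition machinery by clopen partitions.
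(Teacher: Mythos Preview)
Your overall strategy---spectral theorem, linearize via Proposition~\ref{P1}, factorize to find a measure $\mu$ on $M$, extend to $L^0(\mu,\mathbb{T})$, and invoke Theorem~\ref{T:LLmain}---is exactly the paper's, but two execution points differ from what the paper actually does, and one of them is a genuine soft spot.

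\emph{Existence.} You posit a single factorization theorem producing a continuous linear $\tilde\theta:L^0(\mu,\mathbb{R})\to L^0(\nu,\mathbb{R})$ through which $\theta$ factors. No standard result gives this in one step, and in fact a bounded operator $L^2(\mu,\mathbb{R})\to L^0(\nu,\mathbb{R})$ need not extend to $L^0(\mu,\mathbb{R})$, so the existence of $\tilde\theta$ on all of $L^0(\mu,\mathbb{R})$ is not guaranteed. The paper avoids this issue entirely: it chains Nikishin, Mezrag, and Pisier (Proposition~\ref{P2}) to factor $\theta$ through a Hilbert space, then applies Grothendieck--Pietsch (Proposition~\ref{P:GP}) to obtain only that $\theta$ is continuous for the $L^2(\mu)$ topology on $C(M,\mathbb{R})$. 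The passage to $L^0(\mu,\mathbb{T})$ is then carried out at the \emph{group} level, not the linear level: zero-dimensionality lets every $f\in C(M,\mathbb{T})$ be written as $\exp(2\pi i g)$ with $|g|<2/3$, and on this bounded set the $L^2(\mu)$ and $L^0(\mu)$ topologies coincide, so $\psi$ itself is $L^0(\mu)$-continuous and extends by density (Lemma~\ref{L:emgf}). No ``integer-valued descent'' argument for $\tilde\theta$ is needed.

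\emph{Uniqueness.} Your plan to rerun the $H_K$/eigenvalue/Jankov--von Neumann argument with clopen partitions would work, but the paper does something slicker: given two presentations $(\lambda^i_\kappa)$ and $((\lambda')^i_\kappa)$, it forms the sum $\mu+\mu'$ of all their marginal push-forwards on $M$, observes that $\phi$ is $L^0(\mu+\mu')$-continuous (using either presentation), extends to a representation of $L^0(\mu+\mu',\mathbb{T})$, and then simply quotes the uniqueness part of Theorem~\ref{T:LLmain}. This saves repeating the entire combinatorial argument.
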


It is not difficult to see that the theorem above fails without the assumption
of zero-dimensionality, for example, it fails for $M=[0,1]$.

We have a corollary similar to Corollary~\ref{C:measl}. Recall
that given a finite Borel measure $\nu$ on a compact second
countable space $M$, mapping a function from $C(M, {\mathbb T})$
to its equivalence class in $L^0(\nu, {\mathbb T})$ induces a continuous
homomorphism from the first group to the latter.

\begin{corollary}\label{C:measc}
Let $M$ be a compact second countable zero-dimensional space. Let $\phi$ be a
continuous unitary representation of $C(M, {\mathbb T})$. There
exists a finite Borel measure $\nu$ on $M$ such that
\begin{enumerate}
\item[(i)] $\phi$ is the composition of the natural homomorphism
from $C(M, {\mathbb T})$ to $L^0(\nu, {\mathbb T})$ and a
continuous unitary representation of $L^0(\nu, {\mathbb T})$;

\item[(ii)] if $\nu'$ is a finite Borel measure on $M$ such that
$\phi$ is the composition of the natural homomorphism from $C(M,
{\mathbb T})$ to $L^0(\nu', {\mathbb T})$ and a continuous unitary
representation of $L^0(\nu', {\mathbb T})$, then $\nu\ll \nu'$.
\end{enumerate}
\end{corollary}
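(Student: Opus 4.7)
The plan is to mirror the proof of Corollary~\ref{C:measl}, with Theorem~\ref{T:CCmain} replacing Theorem~\ref{T:LLmain} and with one extra step inserted: since the image of $C(M, {\mathbb T})$ in $L^0(\nu, {\mathbb T})$ is only dense rather than surjective, the factoring representation of $L^0(\nu, {\mathbb T})$ in~(i) has to be produced by hand rather than read off immediately from the factoring property.

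First I would apply Theorem~\ref{T:CCmain} to $\phi$ to obtain finite Borel measures $\lambda^j_\kappa$ on $M^{|\kappa|}$ for $\kappa\in S$ and $j\in {\mathbb N}$, presenting $\phi\upharpoonright H_0$ as a direct sum of the $\sigma(\kappa, \lambda^j_\kappa)$. For each $\kappa\in S$, each $j\in {\mathbb N}$, and each $i\leq |\kappa|$, I would form the push-forward $(\pi_i)_*\lambda^j_\kappa$ on $M$, obtaining countably many finite Borel measures there. Setting $\nu$ to be a weighted sum of these with strictly positive, summable coefficients defines the candidate measure.

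For (i), I would construct a continuous unitary representation of $L^0(\nu, {\mathbb T})$ on $H$ directly: on each summand $L^2(\lambda^j_\kappa, {\mathbb C})$ of $H_0$, send $g\in L^0(\nu, {\mathbb T})$ to multiplication by $\prod_{i\leq |\kappa|}(g\circ \pi_i)^{k_i}$, which is well defined because $(\pi_i)_*\lambda^j_\kappa\ll \nu$; on $H_0^\perp$, let it act trivially. Strong operator continuity on each summand follows from the observation that $g_n\to g$ in $\nu$-measure forces $g_n\circ \pi_i\to g\circ \pi_i$ in $\lambda^j_\kappa$-measure (again via $(\pi_i)_*\lambda^j_\kappa\ll \nu$), which forces the corresponding multiplication operators to converge strongly; the uniform bound $1$ lets one lift strong operator convergence from summands to the full direct sum. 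On $f\in C(M, {\mathbb T})$ this representation agrees with $\phi(f)$, establishing the factoring. For (ii), given $\nu'$ through which $\phi$ factors as $\phi=\tilde\phi\circ p'$, where $p'\colon C(M, {\mathbb T})\to L^0(\nu', {\mathbb T})$ is the natural homomorphism, I would apply Theorem~\ref{T:LLmain} (using $\nu'$ for $\mu$ and $M$ for $X$) to $\tilde\phi$ to obtain measures $\tilde\lambda^j_\kappa$ on $M^{|\kappa|}$ presenting $\tilde\phi\upharpoonright H_0$ and satisfying \eqref{E:zer2}, \eqref{E:zer1}, \eqref{E:zer3}, \eqref{E:absas}. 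Precomposing with $p'$ yields a presentation of $\phi\upharpoonright H_0$ by the same $\sigma(\kappa, \tilde\lambda^j_\kappa)$ now viewed as representations of $C(M, {\mathbb T})$; since \eqref{E:zerc}, \eqref{E:zerc2}, \eqref{E:itsl} coincide with \eqref{E:zer1}, \eqref{E:zer3}, \eqref{E:absas}, this is a legitimate presentation in the sense of Theorem~\ref{T:CCmain}. The uniqueness part of that theorem then gives $\tilde\lambda^j_\kappa\sim \lambda^j_\kappa$ for every $\kappa, j$, whence each $(\pi_i)_*\lambda^j_\kappa$ is absolutely continuous with respect to $\nu'$, and therefore $\nu\ll \nu'$ by the way $\nu$ was built.

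The main obstacle is the construction in~(i): one has to verify carefully that the direct-sum assignment above defines a strongly continuous unitary representation of $L^0(\nu, {\mathbb T})$ on all of $H$, by exploiting the absolute continuity $(\pi_i)_*\lambda^j_\kappa\ll \nu$ uniformly across $\kappa, j, i$. Once this is in hand, the uniqueness argument in~(ii) is essentially bookkeeping parallel to that in Corollary~\ref{C:measl}, with the bridge between $C(M, {\mathbb T})$-representations and $L^0(\nu', {\mathbb T})$-representations provided by the match of conditions \eqref{E:zerc}, \eqref{E:zerc2}, \eqref{E:itsl} with \eqref{E:zer1}, \eqref{E:zer3}, \eqref{E:absas}.
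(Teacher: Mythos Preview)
Your proposal is correct and follows the same route as the paper: apply Theorem~\ref{T:CCmain} to obtain the $\lambda^j_\kappa$, take $\nu$ to be a weighted sum of their marginals, and for (ii) compare presentations via the uniqueness clause exactly as in Corollary~\ref{C:measl}. The only difference is in (i): rather than assembling the $L^0(\nu,\mathbb{T})$-representation summand by summand as you do, the paper observes that $\phi$ remains continuous when $C(M,\mathbb{T})$ carries the $L^0(\nu)$-topology and then invokes Lemma~\ref{L:emgf} to extend $\phi$ by density---a slicker packaging of the same verification, since the continuity check amounts to precisely the absolute-continuity argument you spell out.
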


Before proving the corollary we recall a simple lemma that will also
be used in the proof of Theorem~\ref{T:CCmain} and whose proof we leave to the reader.

\begin{lemma}\label{L:emgf}
Let $\nu$ be a finite Borel measure on a compact second countable
space $M$. The image of the natural homomorphism from $C(M,
{\mathbb T})$ to $L^0(\nu, {\mathbb T})$ is dense in $L^0(\nu,
{\mathbb T})$.
\end{lemma}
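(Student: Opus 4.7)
The plan is a two-stage approximation. Given $f \in L^0(\nu, \mathbb{T})$ and $\varepsilon > 0$, I would first partition $\mathbb{T}$ into finitely many arcs $A_1, \ldots, A_n$ each of diameter less than $\varepsilon$, pick $t_i \in A_i$, and consider the simple function
\[
\tilde{f} = \sum_{i=1}^n t_i \chi_{B_i}, \qquad B_i := f^{-1}(A_i),
\]
which satisfies $|f - \tilde{f}| < \varepsilon$ pointwise, hence converges to $f$ uniformly (and in measure) as $\varepsilon \to 0$. So the density claim reduces to approximating any such $\tilde{f}$, with the $B_i$ pairwise disjoint Borel sets covering $M$ and $t_i \in \mathbb{T}$, by elements of $C(M, \mathbb{T})$ in measure.

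For the second stage I would invoke regularity: since $\nu$ is a finite Borel measure on the compact metrizable space $M$, I can choose pairwise disjoint compact sets $K_i \subseteq B_i$ with $\nu(M \setminus \bigcup_i K_i)$ arbitrarily small. Normality of $M$ then produces pairwise disjoint open neighborhoods $U_i \supseteq K_i$, and Urysohn's lemma supplies continuous $\phi_i \colon M \to [0,1]$ with $\phi_i \equiv 1$ on $K_i$ and $\phi_i \equiv 0$ off $U_i$. Writing $t_i = e^{2\pi i s_i}$ for some $s_i \in \mathbb{R}$, I would define
\[
g(x) = \exp\!\Bigl(2\pi i \sum_{i=1}^n s_i \phi_i(x)\Bigr).
\]
Pairwise disjointness of the $U_i$'s forces $\phi_i|_{K_j} = 0$ whenever $i \neq j$, so on $K_j$ only the $j$-th summand survives and $g \equiv t_j = \tilde{f}$ there. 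Hence $g \in C(M, \mathbb{T})$ agrees with $\tilde{f}$ off a set of arbitrarily small $\nu$-measure, giving the required measure-approximation.

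The only subtle point is the extension step: I extend the locally constant function $\tilde{f}|_{\bigcup K_i}$ using the exponential parametrization of $\mathbb{T}$ rather than by Tietze-extending $\tilde{f}$ directly as a $\mathbb{C}$-valued function, since a straight Tietze extension would generally leave the circle. Working with the phases $s_i \phi_i$ and then exponentiating keeps the extension inside $\mathbb{T}$ automatically, sidestepping the fact that $\mathbb{T}$ is only an ANR and not an AR. With this trick in place, the rest is a routine combination of arc-partitioning, regularity, and Urysohn.
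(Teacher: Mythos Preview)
Your argument is correct: the two-stage approximation (simple $\mathbb{T}$-valued functions, then regularity plus Urysohn combined with the exponential to stay on the circle) is a clean and standard way to prove this, and your remark about why a direct Tietze extension would not suffice is apt. The paper itself does not give a proof of this lemma---it is explicitly ``left to the reader''---so there is nothing to compare your approach against; your write-up is exactly the kind of routine verification the author had in mind.
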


\begin{proof}[Proof of Corollary~\ref{C:measc}]
This corollary follows from Theorem~\ref{T:CCmain} in a manner
essentially identical to the argument showing
Corollary~\ref{C:measl}. The only addition is the following
remark. After the measure $\nu$ is produced, we get that the
representation $\phi$ remains continuous when $C(M, {\mathbb
T})$ is taken with the $L^0$ topology with respect to $\nu$. At
this point we use Lemma~\ref{L:emgf} to extend $\phi$ to a
continuous unitary representation of $L^0(\nu, {\mathbb T})$.
After that the proof again follows the route of the proof of
Corollary~\ref{C:measl}.
\end{proof}

We will give the proof of Theorem~\ref{T:CCmain} in
Subsection~\ref{Su:pr}. In Subsection~\ref{Su:moba}, we collect
some more results needed for the argument.

\subsection{More background results}\label{Su:moba}

As explained below the following result is a combination of the
factorization theorems from \cite{N}, \cite{Me} and \cite{P}.

\begin{proposition}\label{P2}
Let $M$ be a compact, second countable space, and let $\mu$ be a Borel
probability measure on a standard Borel space. Each continuous
operator from $C(M, {\mathbb R})$ to $L^0(\mu, {\mathbb R})$
factors through a real Hilbert space.
\end{proposition}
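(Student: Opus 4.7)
The plan is to combine the factorization theorems of Nikishin--Maurey and Pisier, as the parenthetical attribution in the statement already suggests. Given a continuous linear operator
\[
T\colon C(M,\mathbb R)\to L^0(\mu,\mathbb R),
\]
one first improves its range to a suitable normed target using a Nikishin-type factorization, and then factors the improved operator through a real Hilbert space using a Grothendieck--Pisier-type theorem.

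Step 1 (Nikishin--Maurey). Since $C(M,\mathbb R)$ is a $C(K)$-space and hence has cotype~$2$, the factorization theorem of Nikishin \cite{N}, in the form refined by Maurey \cite{Me}, yields a probability measure $\nu$ equivalent to $\mu$ (obtained by multiplying by a strictly positive $\mu$-density) together with a bounded linear operator $T_1\colon C(M,\mathbb R)\to L^{2,\infty}(\nu,\mathbb R)$ into the weak-$L^2$ space, such that
\[
T = j\circ T_1,
\]
where $j\colon L^{2,\infty}(\nu,\mathbb R)\to L^0(\mu,\mathbb R)$ is the canonical continuous inclusion. The density change is harmless at the level of $L^0$, since convergence in $\mu$-measure and in $\nu$-measure coincide for equivalent measures.

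Step 2 (Pisier). To the bounded linear operator $T_1$ obtained above I would apply Pisier's factorization theorem from \cite{P}, whose content is that every bounded linear operator from a $C(K)$-space into a weak-$L^2$ space factors through a real Hilbert space. This supplies a real Hilbert space $H$ together with bounded linear maps $S\colon C(M,\mathbb R)\to H$ and $R\colon H\to L^{2,\infty}(\nu,\mathbb R)$ with $T_1 = R\circ S$. Composing the two factorizations, $T = j\circ R\circ S$ is the desired factorization of $T$ through the Hilbert space $H$, with both factors continuous and linear.

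The main obstacle is not conceptual but bibliographic: both the Nikishin--Maurey and the Pisier theorems come in numerous variants (differing in cotype hypotheses, in the precise Lorentz-type intermediate space, and in the real-versus-complex scalar convention), so one must pick versions whose hypotheses and conclusions match up so that the two steps chain together as stated. This matching is carried out uniformly in Pisier's book \cite{P}, which is presumably why the proof is indicated only by citations.
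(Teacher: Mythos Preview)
There is a genuine gap in Step~1. The assertion that ``$C(M,\mathbb R)$ is a $C(K)$-space and hence has cotype~$2$'' is false whenever $M$ is infinite: $C(M,\mathbb R)$ then contains $\ell^\infty_n$ isometrically for every $n$, and therefore has no nontrivial cotype (and no nontrivial type either). The Maurey--Nikishin improvement that lands directly in $L^{2,\infty}$ requires a type~$2$ hypothesis on the domain; for a general Banach domain, Nikishin's theorem only yields factorization through $L^{p}$ (or $L^{p,\infty}$) for exponents $0<p<1$ after a change of density. So as written your Step~1 does not produce an operator into weak-$L^2$, and Step~2 has nothing to act on.

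This is precisely the difficulty the paper's proof addresses by inserting an intermediate step. The argument there is a three-link chain: Nikishin \cite{N} factors $T$ through $L^p(\mu,\mathbb R)$ for some (indeed any) $0<p<1$; then Mezrag \cite[Th\'eor\`eme~III,2]{Me} factors the resulting operator $C(M,\mathbb R)\to L^p(\mu,\mathbb R)$ through $L^q(\mu,\mathbb R)$ for some $1<q\le 2$; finally Pisier \cite[Corollary~4.4]{P} factors $C(M,\mathbb R)\to L^q(\mu,\mathbb R)$ through a Hilbert space. The Mezrag step is exactly what bridges the gap between the $p<1$ range that Nikishin provides for a $C(K)$ domain and the $1\le q\le 2$ range where the Grothendieck--Pisier factorization applies. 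Your two-step outline collapses this bridge by invoking a cotype property that $C(K)$ spaces do not possess.
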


By \cite{N} each operator $C(M, {\mathbb R})\to L^0(\mu, {\mathbb
R})$ factors through $L^p(\mu, {\mathbb R})$ for each $0\leq p<1$;
by \cite[Th{\'e}or{\`e}me III,2]{Me}, for each $0<p<1$, each operator
$C(M, {\mathbb R})\to L^p(\mu, {\mathbb R})$ factors through
$L^q(\mu, {\mathbb R})$ for some $1<q\leq 2$; by \cite[Corollary
4.4]{P}, for each $1\leq q\leq 2$, each operator $C(M, {\mathbb
R})\to L^q(\mu, {\mathbb R})$ factors through a Hilbert space. A
simple concatenation of these three results gives the proposition
above. It is possible that the proposition can be
obtained by the methods of \cite[Corollaire 34, Th{\'e}or{\`e}me 93]{M},
where the same result is proved with $C(M, {\mathbb R})$ replaced
by $L^\infty$.

We will need the following result that is a combination of
theorems of Grothendieck and Pietsch, see \cite[Theorem 5.4(b),
Corollary 1.5]{P}.

\begin{proposition}\label{P:GP}
Let $M$ be a compact, second countable space and let $H$ be a separable
real Hilbert space. Let $T\colon C(M, {\mathbb R})\to H$ be a
continuous linear operator. There exists a Borel probability
measure $\nu$ on $M$ such that $T$ remains continuous if we take
$C(M,{\mathbb R})$ with the $L^2$ topology with respect to $\nu$.
\end{proposition}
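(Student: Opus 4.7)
The plan is to deduce Proposition~\ref{P:GP} directly from two classical cornerstones of the theory of absolutely summing operators: Grothendieck's theorem, which asserts that every bounded linear operator from a $C(K)$ space into a Hilbert space is $2$-summing, and Pietsch's domination theorem, which guarantees that every $2$-summing operator defined on a $C(K)$ space is dominated by the $L^2$ norm associated with some probability measure on $K$. Both results are packaged in Pisier's book \cite{P}, which is already cited in the excerpt for Proposition~\ref{P2}.

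First, I would invoke Grothendieck's theorem in the form of \cite[Theorem 5.4(b)]{P}: since the codomain $H$ is a Hilbert space and the domain is a $C(M,\mathbb{R})$ space, the continuous linear operator $T$ is automatically $2$-summing (equivalently, absolutely $2$-summing). This is precisely where the geometry of Hilbert spaces and the injective tensor structure of $C(M,\mathbb{R})$ combine.

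Second, having recognized $T$ as a $2$-summing operator, I would apply Pietsch's domination theorem in the form of \cite[Corollary 1.5]{P}. This produces a Borel probability measure $\nu$ on $M$ and a constant $C>0$ such that
\[
\|T(f)\|_H \leq C\left(\int_M |f|^2\, d\nu\right)^{1/2}
\]
for every $f\in C(M,\mathbb{R})$. This estimate is exactly the statement that $T$ is continuous when $C(M,\mathbb{R})$ is equipped with the $L^2$ topology with respect to $\nu$, which is the desired conclusion.

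There is essentially no obstacle beyond correctly invoking the two theorems; the content of the proposition is entirely absorbed by the classical results. The only minor point to verify is that both Grothendieck's theorem and Pietsch's theorem, as stated in \cite{P}, apply equally to real scalars, which is indeed the case. Thus the proof amounts to a one-line concatenation of \cite[Theorem 5.4(b)]{P} with \cite[Corollary 1.5]{P}, in the same spirit as the proof of Proposition~\ref{P2}.
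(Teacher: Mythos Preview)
Your proposal is correct and matches the paper's own treatment exactly: the paper does not give a separate proof but simply states that the proposition is a combination of Grothendieck's theorem and Pietsch's domination theorem, citing precisely \cite[Theorem 5.4(b), Corollary 1.5]{P}. Your elaboration of how these two results concatenate is accurate and faithful to what the paper intends.
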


\subsection{Proof of Theorem~\ref{T:CCmain}}\label{Su:pr}

As before, when we say ``representation" we mean ``strongly continuous unitary
representation."

Assume we have a representation $\phi$ of
$C(M,{\mathbb T})$ on a separable complex Hilbert space.
As in the start of the proof of Theorem~\ref{T:LLmain}, we use the
spectral theorem, Proposition~\ref{P:sp}, to see that the unitary
representation $\phi$ of $C(M, {\mathbb T})$ is
the form
\[
C(M, {\mathbb T})\ni f\to N_f\in {\mathcal U}(L^2(\nu, {\mathbb C}))
\]
where
\[
N_f(h) = \psi(f)\cdot h
\]
for some continuous homomorphism
\begin{equation}\label{E:homc}
\psi\colon C(M, {\mathbb T})\to L^0(\nu, {\mathbb T}).
\end{equation}

Consider now the exponential map
\begin{equation}\notag
C(M, {\mathbb R})\to C(M, {\mathbb T});\; f\to \exp(2\pi i f).
\end{equation}
Note that $C(M, {\mathbb T})$ is
connected since $M$ is zero-dimensional. Since the image of $C(M, {\mathbb R})$ under
the exponential map contains an open ball
around the identity in $C(M, {\mathbb T})$, by connectedness of $C(M, {\mathbb T})$, it is actually equal to $C(M, {\mathbb
T})$. Thus, we have a surjective continuous homomorphism
\begin{equation}\label{E:expc}
C(M, {\mathbb R})\to C(M, {\mathbb T});\; f\to \exp(2\pi i f).
\end{equation}
By precomposing $\psi$ with the exponential homomorphism
\eqref{E:expc}, we obtain a continuous homomorphism $\psi' \colon
C(M,{\mathbb R})\to L^0(\nu, {\mathbb T})$. By
Proposition~\ref{P1}, there exists a continuous linear operator
\[
\theta\colon C(M,{\mathbb R})\to L^0(\nu, {\mathbb R})
\]
such that
\[
\psi' = \exp(2\pi i\theta).
\]
By Proposition~\ref{P2}, $\theta$ factors through a Hilbert space,
that is, there is a continuous linear operator $\theta'\colon C(M,
{\mathbb R}) \to H$, where $H$ is a separable real Hilbert space,
such that $\theta$ is the composition of $\theta'$ and a
continuous linear operator $H\to L^0(\nu, {\mathbb R})$. We can
assume that $H$ is separable since $C(M, {\mathbb R})$ is. Now by
Proposition~\ref{P:GP}, there exists a probability Borel measure
$\mu$ on $M$ such that $\theta'$ remains continuous if $C(M,
{\mathbb R})$ is taken with the $L^2$ topology with respect to
$\mu$.

We make two observations. First, since $M$ is zero-dimensional, for each $f\in C(M, {\mathbb T})$, there exists $g\in C(M, {\mathbb R})$
with $|g|< 2/3$ and $f = \exp(2\pi i g)$. 
Our second observation consist of noticing that the $L^2$
and the $L^0$ topologies with respect to $\mu$ are identical on the subset of $C(M, {\mathbb R})$ consisting
of all functions bounded by $2/3$. Now, given a sequence $(f_n)$ in $C(M, {\mathbb T})$ convergent to $1$ in the $L^0$ topology with respect to
$\mu$, pick a sequence $(g_n)$
in $C(M, {\mathbb R})$ such that
$|g_n|<2/3$ and $f_n = \exp(2\pi i g_n)$ for each $n$. It follows that $(g_n)$ converges to $0$ in the $L^0$ topology with respect to $\mu$. Thus,
we get that $(g_n)$ converges to $0$ in the $L^2$ topology, which implies that the sequence $(\psi'(g_n))$ converges to $1$ in
$L^0(\nu, {\mathbb T})$. Since, by definition of $\psi'$, $\psi'(g_n) = \psi(f_n)$, we see that $(\psi(f_n))$ converges to $1$ in
$L^0(\nu, {\mathbb T})$.
It follows that the homomorphism $\psi$ given by \eqref{E:homc}
remains continuous if $C(M, {\mathbb T})$ is taken with the $L^0$
topology with respect to $\mu$.

By Lemma~\ref{L:emgf} we can extend $\psi$ to a continuous homomorphism
\[
\widetilde{\psi}\colon L^0(\mu, {\mathbb T})\to L^0(\nu, {\mathbb
T}).
\]
We apply now Theorem~\ref{T:LLmain} to the representation induced
by this homomorphism and the existence part of
Theorem~\ref{T:CCmain} follows immediately.

The uniqueness part is a consequence of the uniqueness part of
Theorem~\ref{T:LLmain}. Assume we have two presentations as in
Theorem~\ref{T:CCmain} given by $(\lambda^i_\kappa)$ and
$((\lambda')^i_\kappa)$ for $\kappa\in S$ and $i\in {\mathbb N}$
of a single representation $\phi$ of $C(M, {\mathbb T})$
restricted to $H_0$. We define a finite Borel measure $\mu$ on $M$
as follows. For each $\lambda^i_\kappa$ consider all finitely many
measures on $M$ obtained from $\lambda^i_\kappa$ by pushing it
forward by all the projections from $M^{|\kappa|}$ to $M$. Collect
all such push-forwards of $\lambda^i_\kappa$ for all $i$ and
$\kappa$ and form a weighted sum of this countable collection of
finite Borel measures on $M$ obtaining $\mu$. In the same manner
define $\mu'$ from $(\lambda')^i_\kappa$ for $i\in {\mathbb N}$
and $\kappa\in S$. Now it is clear, using either one of the two
presentations, that the representation $\phi$ remains continuous
when we consider $C(M, {\mathbb T})$ with the $L^0$ topology
with respect to the measure $\mu+\mu'$. Using density of $C(M,
{\mathbb T})$ in $L^0(\mu+\mu', {\mathbb T})$, we see
that $\phi$ extends to a unitary representation $\widetilde\phi$
of $L^0(\mu+\mu', {\mathbb T})$. Now both presentation, the one
given by $(\lambda^i_\kappa)$ and the one given by
$((\lambda')^i_\kappa)$, are presentations of $\widetilde\phi$
restricted to $H_0$ as in Theorem~\ref{T:LLmain}. By the
uniqueness part of that theorem, we get $\lambda^i_\kappa\sim
(\lambda')^i_\kappa$ for all $i$ and $\kappa$ as required.

\end{document}